\title
{Taut foliations\\
and the actions of fundamental groups\\
on leaf spaces and universal circles}
\author{Yosuke Kano}
\address{
Department of Mathematics and Informatics,
Graduate School of Science,
Chiba University,
Chiba 263-8522,
Japan
}
\email{ykano@g.math.s.chiba-u.ac.jp}
\thanks{
This work is partially supported by AGSST support 
program
for young researchers, 
2011, Chiba University, Japan
}
\newtheorem{thm}{Theorem}[section]
\newtheorem{lem}[thm]{Lemma}
\newtheorem{cor}[thm]{Corollary}
\newtheorem{prop}[thm]{Proposition}
\theoremstyle{definition}
\newtheorem{df}[thm]{Definition}
\theoremstyle{remark}
\newtheorem{rem}[thm]{Remark}
\newtheorem{ex}[thm]{Example}
\newcommand{\univ}{\operatorname{univ}}
\newcommand{\stab}{\operatorname{Stab}}
\newcommand{\homeo}{\operatorname{Homeo}^{+}}
\begin{document}
\maketitle
\begin{abstract}
Let $\mathcal{F}$ be a leafwise hyperbolic taut foliation 
of a closed $3$-manifold $M$
and let $L$ be the leaf space of the pullback of $\mathcal{F}$
to the universal cover of $M$.
We show that
if $\mathcal{F}$ has branching,
then the natural action of $\pi_1(M)$ on $L$ is faithful.
We also show that
if $\mathcal{F}$ has a finite branch locus $B$ 
whose stabilizer $\stab(B)$ acts on $B$ nontrivially,
then $\stab(B)$ is an infinite cyclic group
generated by an indivisible element of $\pi_1(M)$.
\end{abstract}


\section{Introduction}

Unless otherwise specified,
we assume throughout this article that
$M$ is a closed oriented 3-manifold
and $\mathcal{F}$ a codimension one transversely oriented,
leafwise hyperbolic, 
taut foliation of $M$.
Here we say that
$\mathcal{F}$ is {\it leafwise hyperbolic}
if there is a Riemannian metric on $M$ such that
the induced metrics on leaves have constant curvature $-1$,
and that $\mathcal{F}$ is {\it taut}
if there is a loop in $M$ which intersects every leaf of $\mathcal{F}$ 
transversely.
Note that by Candel \cite{can},
if $M$ is irreducible and atoroidal,
then every taut foliation of $M$ is leafwise hyperbolic.
Leafwise hyperbolic taut foliations
have been extensively investigated by many people in connection
with the theory of $3$-manifolds
(See e.g. Calegari's book \cite{c07}).
One of the most powerful methods of analyzing the structure of such foliations
is to consider canonical actions of $\pi_1(M)$ on $1$-manifolds
naturally associated with $\mathcal{F}$.
Two kinds of such $1$-manifolds are known.
The first one, denoted $L$, is 
the leaf space of $\widetilde{\mathcal{F}}$,
where $\widetilde{\mathcal{F}}$ is the pullback of $\mathcal{F}$ 
to the universal cover $\widetilde{M}$ of $M$.
The action of $\pi _1(M)$ on $\widetilde{M}$ 
induces an action of $\pi _1(M)$ on $L$.
In the sequel we refer to it as the {\it natural action}.
The second one is a universal circle.
Given $\mathcal{F}$,
by unifying circles at infinity of all the leaves of $\widetilde{\mathcal{F}}$,
Thurston \cite{thu} (See also Calegari-Dunfield \cite{cd03}) constructs
a universal circle with a canonical $\pi_1(M)$ action.

We say that $\mathcal{F}$ has {\it branching} if $L$ is non-Hausdorff.
The first result of this article is the following:

\vspace{9pt}

\noindent
{\bf Theorem 3.2.}\
{\it
If $\mathcal{F}$ has branching,
then the natural action on $L$ is faithful.
}

\vspace{9pt}

This result is obtained from an investigation of both actions of $\pi_1(M)$
on the leaf space and on the universal circle (see \S 3).
Notice that the hypothesis that $\mathcal{F}$ has branching is indispensable.
In fact, just consider a surface bundle over $S^1$ foliated by fibers.
Notice also that
Calegari and Dunfield \cite[Theorem 7.10]{cd03} have already shown that
for any taut foliation one can modify it
by suitable Denjoy-like insertions
so that the natural action associated with the resulting foliation becomes faithful.
In the case where the foliation is leafwise hyperbolic and has branching,
our result is stronger than theirs 
in that we assure faithfulness without performing any modifications.

Next we consider the stabilizer of a branch locus of $\mathcal{F}$.
We call a subset $B$ of $L$ a {\it positive {\rm (resp.} negative{\rm )} branch locus}
if $B$ contains at least two points
and if it can be expressed in the form
$B=\lim_{t\to 0}\nu_t$
for some interval
$\{\nu_t\in L\mid 0<t<\epsilon\}$ embedded in $L$
in such a way that the parameter $t$ is incompatible (resp. compatible)
with the orientation of $L$
(Note that $L$ has a natural orientation induced from the transverse orientation of $\widetilde{\mathcal{F}}$).
Branch loci have been studied e.g. in  \cite{f98}, \cite{she}.
For a branch locus $B$
we define the {\it stabilizer} of $B$ by
$\stab(B)=\{\alpha\in\pi_1(M)\mid\alpha(B)=B\}$.

In the case where a branch locus $B$ is finite,
we obtain the following results
about the action of $\stab(B)$ on $B$ (see \S 5 for details). 

\vspace{9pt}

\noindent
{\bf Theorem 5.2.}\
{\it
Let $B$ be a finite branch locus of $L$.
If an element of $\stab(B)$ fixes some point of $B$
then it fixes all the points of $B$.
}

\vspace{9pt}

We remark that for Anosov foliations Fenley \cite[Theorem D]{f98}
proves related results to this theorem.

Let $\pi:\widetilde M\to M$ be the covering projection.
For a leaf $\lambda$ of $\widetilde{\mathcal{F}}$,
we denote by $\underline{\lambda}$ the projected leaf 
$\pi(\lambda)$ of $\mathcal{F}$.

\vspace{9pt}

\noindent
{\bf Theorem 5.3.}\
{\it
Let $B$ be a finite branch locus of $L$.
Then, for any $\lambda\in B$,
\begin{enumerate}
\item[1.]
if $\stab(B)$ is trivial,
$\underline{\lambda}$ is diffeomorphic to a plane, and
\item[2.]
if $\stab(B)$ is nontrivial,
$\underline{\lambda}$ is diffeomorphic to a cylinder.
\end{enumerate}
}

\noindent
{\bf Theorem 5.6.}\
{\it
Let $B$ be a finite branch locus of $L$
with nontrivial stabilizer.
Then the stabilizer $\stab (B)$ is isomorphic to $\mathbb{Z}$.
}

\vspace{9pt}

We say that $\alpha\in\pi_1(M)$ is {\it divisible}
if there is some $\beta\in\pi_1(M)$ and an integer $k\geq 2$
such that $\alpha =\beta ^k$.
Otherwise we say $\alpha$ is {\it indivisible}.

\vspace{9pt}

\noindent
{\bf Theorem 5.7.}\
{\it
Let $B$ be a finite branch locus of $L$
such that $\stab (B)$ acts on $B$ nontrivially.
Then a generator of $\stab(B)$ $(\cong\mathbb{Z})$ is indivisible.
}

\vspace{9pt}

For an oriented loop $\gamma$ in $M$,
we say that $\gamma$ is {\it tangentiable}
if $\gamma$ is freely homotopic to a leaf loop 
(a loop contained in a single leaf) of $\mathcal{F}$,
and that $\gamma$ is positively (resp. negatively) {\it transversable}
if $\gamma$ is freely homotopic to a loop positively (resp. negatively) 
transverse to $\mathcal{F}$.
As a final topic of this article,
we study relations between the infiniteness of branch loci
and the existence of a non-transversable leaf loop in $M$ (see \S 6).
One of the results we obtain is the following:

\vspace{9pt}

\noindent
{\bf Theorem 6.5.}\
{\it
Suppose $\mathcal{F}$ has branching.
If there is a non-contractible leaf loop in $M$
which is not freely homotopic to a loop transverse to $\mathcal{F}$,
then $\mathcal{F}$ has an infinite branch locus.
}

\vspace{9pt}

This article is organized as follows.
In Section 2, we briefly review the Calegari-Dunfield construction of a 
universal circle.
Using their construction,
we prove the faithfulness of the natural action
of $\pi_1(M)$ on $L$ in Section 3.
In Section 4, we introduce a notion of comparable sets
and give several basic properties of such sets,
which are applied in Section 5 to
the investigation of the structure of finite branch loci and their stabilizers.
In Section 6, 
we study how the non-transversability
of leaf loops in $M$ is related 
to the infiniteness of branch loci in $L$.


\section{Universal circle}
In this section we briefly recall the definition  of a universal circle after \cite{cd03}.

Let $M$, $\mathcal{F}$ and $L$ be as in the introduction.
For $\lambda, \mu\in L$
we write $\lambda <\mu$
if there is an oriented path in $\widetilde{M}$
from $\lambda$ to $\mu$
which is positively transverse to $\widetilde{\mathcal{F}}$.
We say that $\lambda$ and $\mu$ are {\it comparable}
if either $\lambda\le\mu$ or $\lambda\ge\mu$.
For a leaf $\lambda$ of $\widetilde{\mathcal{F}}$,
the {\it endpoint map} $e:T_p\lambda-\{0\}\to S^1_\infty(\lambda)$
from the tangent space of $\lambda$ at $p$
to the ideal boundary of $\lambda$
takes a vector $v$
to the endpoint at infinity of the geodesic ray $\gamma$
with $\gamma(0)=p$ and  $\gamma '(0)=v$.
The {\it circle bundle at infinity} is the disjoint union
$E_\infty =\bigcup _{\lambda\in L}S^1_\infty (\lambda)$
with the finest topology such that the endpoint map
$e:T\widetilde{\mathcal{F}}-(\text{zero section})\to E_\infty$
is continuous.
A continuous map $\phi :X\to Y$
between oriented $1$-manifolds homeomorphic to $S^1$
is {\it monotone}
if it is of mapping degree one
and if the preimage of any point of $Y$ is contractible.
A {\it gap} of $\phi$ is the interior in $X$ of such a preimage.
The {\it core} of $\phi$ is the complement of the union of gaps.

\begin{df}
A {\it universal circle} $S^1_{\univ}$ for $\mathcal{F}$
is a circle together with a homomorphism
$\rho_{\univ}:\pi_1 (M)\to\homeo(S^1_{\univ})$
and a family of monotone maps
$\phi _{\lambda}:S^1_{\univ}\to S^1_\infty (\lambda)$
, $\lambda\in\widetilde{\mathcal{F}}$,
satisfying the following conditions:
\begin{enumerate}
\item[1.]
For every $\alpha\in\pi_1 (M)$, the following diagram commutes:
$$
\begin{CD}
S^1_{\univ}@>\rho _{\univ}(\alpha)>>S^1_{\univ}\\
@V\phi_ {\lambda}VV@V\phi _{\alpha(\lambda)}VV\\
S^1_\infty(\lambda)@>\alpha>>S^1_\infty(\alpha (\lambda))\\
\end{CD}
$$
\item[2.]
If $\lambda$ and $\mu$ are incomparable
then the core of $\phi _{\lambda}$ is contained
in the closure of a single gap of $\phi_{\mu}$
and {\it vice versa}.
\end{enumerate}
\end{df}

Calegari-Dunfield's way of construction
of a universal circle is as follows.
Let $I=[0,1]$ be the unit interval.
A {\it marker} for $\mathcal{F}$
is a continuous map $m:I\times\mathbb{R}^{+}\to\widetilde{M}$
with the following properties:
(1) For each $s\in I$, the image $m(s\times\mathbb{R}^{+})$ is
a geodesic ray in a leaf of $\widetilde{\mathcal{F}}$.
We call these rays the {\it horizontal rays} of $m$.
(2) For each $t\in\mathbb{R}^{+}$,
the image $m(I\times t)$ is transverse to
$\widetilde{\mathcal{F}}$
and
of length smaller than some constant depending only 
on $\widetilde{\mathcal{F}}$.

We use the interval notation $[\lambda ,\mu]$
to represent the oriented image of an injective continuous map
$c:I\to L$ such that $c(0)=\lambda$ and $c(1)=\mu$,
and refer it to the {\it interval} from $\lambda$ to $\mu$.
Here, notice that the orientation of such an interval is induced from that of $I$
(not from that of $L$).

Let  $J=[\lambda ,\mu]$ be an interval in $L$
and let $m$ be a marker which intersects only leaves of $\widetilde{\mathcal{F}}|_J$.
Then the endpoints of the horizontal rays of $m$ form an interval
in $E_\infty |_J$
which is transverse to the circle fibers.
By abuse of notation we refer to
such an interval as a marker.

For each $\nu\in J$,
the intersection of $S^1_\infty (\nu)$ with the union of all markers 
are dense in $S^1 _\infty (\nu)$.
If two markers $m_1,m_2$ in $E_\infty |_J$ are not disjoint,
their union $m_1\cup m_2$ is also an interval transverse to the circle fibers.
It follows that
a maximal such union of markers
is still an interval.
Again by abuse of notation we call such an interval a marker.

A continuous section $\tau :J\to E_\infty |_J$ is {\it admissible}
if the image of $\tau$ does not cross (but might run into) any marker.
The {\it leftmost section}
$\tau(p,J):J\to E_\infty |_J$
starting at $p\in S^1_\infty (\lambda)$
is an admissible section
which is anticlockwisemost among all such sections
if the order of $J$ is compatible with that of $L$,
and clockwisemost otherwise.
For any $p$ the leftmost section starting at $p$ exists.

Let $B=\lim _{t\to 0}\nu _t$ be a branch locus
and let $\mu _1, \mu _2\in B$.
For each $t>0$,
let $\alpha _t=[\mu_1 ,\nu _t]$
and $\beta _t=[\nu _t,\mu_2]$.
Then, we can define a map
$r_t: S^1_\infty (\mu_1)\to S^1_\infty (\mu_2)$
by $r_t(p)=\tau(\tau(p,\alpha_t)(\nu _t),\beta_t)(\mu _2)$.
As $t$ tends to $0$, $r_t$ converges to a constant map.
We denote the image of the constant map 
by $r(\mu _1,\mu _2)\in S^1_\infty (\mu _2)$
and call it
the {\it turning point from $\mu _1$ to $\mu _2$}.

Given an incomparable pair $\lambda ,\mu\in L$,
we define a \lq\lq {\it path}" from $\lambda$ to $\mu$ 
to be a disjoint union of finitely many intervals
$[\hat{\nu}_{i-1}, \check{\nu}_{i}]$, $1\le i\le n$, in $L$
(some of them may be degenerated to singletons),
with the following properties:
(1) $\hat{\nu}_0=\lambda$ and $\check{\nu}_{n}=\mu$,
(2) $\check{\nu}_i$ and $\hat{\nu}_i$ belong to
a common branch locus for each $1\le i\le n-1$, and
(3) $n$ is minimal under the conditions (1) and (2).
Note that
a path connecting any two points in $L$ exists and is unique.

For a point $p$ in $S^1_\infty (\lambda)$,
the {\it special section}
$\sigma _p:L\to E_\infty$ at $p$
is defined as follows.
First, set $\sigma _p(\lambda)=p$.
Next, pick any point $\mu\in L$.
We define $\sigma _p(\mu)$ as follows:
When $\mu$ is comparable with $\lambda$,
then $\sigma _p$ is defined on $[\lambda,\mu]$ to be
the leftmost section starting at $p$.
When $\mu$ is incomparable with $\lambda$,
let
$\coprod _{i=1}^n[\hat{\nu}_{i-1},\check{\nu}_{i}]$ $(n>1)$
be the path from $\lambda$ to $\mu$.
We then put
$r=r(\check{\nu}_{n-1},\hat{\nu}_{n-1})
\in S^1_\infty (\hat{\nu}_{n-1})$
and define $\sigma _p$ on the interval $[\hat{\nu}_{n-1},\check{\nu}_{n}]$
by $\sigma _p=\sigma _r$.
This completes the definition of $\sigma_p$.

Let $\mathfrak{S}$ be the union of the special sections $\sigma _p$
as $p$ varies over all points
in all circles $S^1_\infty (\lambda)$
of points $\lambda$ in $L$.
By \cite[Lemma 6.25]{cd03},
the set $\mathfrak{S}$ admits a natural circular order.
The universal circle $S^1_{\univ}$ will be derived from $\mathfrak{S}$
as a quotient of the order completion of $\mathfrak{S}$
with respect to the circular order.
Then one can observe
that any element of $S^1_{\univ}$ is a section $L\to E_\infty$.


\section{Faithfullness of the action}

In this section,
we show that if $\mathcal{F}$ has branching,
then the natural action of $\pi_1(M)$
on the leaf space $L$ is faithful.

As explained in {\S}2,
every element $\sigma$ of $S^1_{\univ}$ is a section
$\sigma :L\to E_\infty =\bigcup _{\lambda\in L} S^1_\infty (\lambda)$
and that the maps
$\phi _\lambda :S^1_{\univ}\to S^1_\infty (\lambda)$
are defined by
$\phi_\lambda(\sigma)=\sigma(\lambda)$.
For a point $x$ in $S^1_\infty (\lambda)$,
we define a (possibly degenerate) closed interval $I_x$
in $S^1_{\univ}$ by
$I_x=\{\sigma\in S^1_{\univ}\mid\sigma (\lambda)=x\}$.
Then, for any $x$ the interval $I_x$ is nonempty
because the special section $\sigma _x$ at $x$
belongs to $I_x$.

Let $\lambda\in L$ and $\alpha\in\pi _1(M)$ be
such that $\alpha(\lambda)=\lambda$.
Then $\alpha$,
as the restriction of a covering transformation of $\widetilde M$ to $\lambda$, induces an isometry
 of the hyperbolic plane $\lambda$,
(hence also a projective transformation of $S^1_\infty (\lambda)$).
We notice that this isometry is a hyperbolic element
(meaning that its trace is greater than $2$).
In fact,
since it has no fixed points in $\lambda$,
it is not elliptic.
If it were parabolic, then it would yield in $M$
a non-contractible loop whose length can be made arbitrarily small,
contradicting the compactness of $M$.

The following is a key lemma.

\begin{lem}
\label{key}
Let $B=\lim _{t\to 0}\nu_t$ be
a branch locus of the leaf space of $L$.
If $\alpha\in\pi _1(M)$ fixes
two distinct points $\mu _1$ and $\mu _2$ in $B$
and also fixes the interval $\{\nu _t\mid 0<t<\epsilon\}$ pointwise,
then $\alpha$ is trivial in $\pi _1(M)$.
\end{lem}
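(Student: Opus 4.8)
The plan is to argue by contradiction: assuming $\alpha\neq 1$, I will manufacture three distinct points of $S^1_\infty(\nu_t)$ fixed by $\alpha$ at a single small level $t$, and then collapse. The starting point is the observation already recorded just before the lemma, that for any leaf $\lambda$ with $\alpha(\lambda)=\lambda$ the restriction $\alpha|_\lambda$ is a hyperbolic isometry of the hyperbolic plane $\lambda$; such an isometry fixes exactly two ideal points of $S^1_\infty(\lambda)$, while, conversely, an orientation-preserving isometry of $\mathbb{H}^{2}$ fixing three distinct ideal points is the identity. Since $\alpha$ preserves the transverse orientation and $M$ is oriented, $\alpha$ preserves the orientation of the leaves, so it acts orientation-preservingly on each $S^1_\infty(\lambda)$. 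Hence if I can exhibit three distinct $\alpha$-fixed ideal points in some $S^1_\infty(\nu_t)$, then $\alpha|_{\nu_t}=\mathrm{id}$, so $\alpha$ fixes a point of $\widetilde M$, and therefore $\alpha=1$ because $\pi_1(M)$ acts freely on $\widetilde M$. Thus the entire proof reduces to producing these three fixed ideal points.

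The first two will come from the branching geometry. As $\alpha$ is an isometry of $\widetilde M$ preserving $\widetilde{\mathcal F}$ and its transverse orientation, it carries markers to markers and preserves all the orientations used in \S2. For small $t$ the markers running from $\nu_t$ up toward $\mu_1$ sweep out a proper open sub-arc $a_t\subset S^1_\infty(\nu_t)$, equivariantly identified with an open arc $a_t'\subset S^1_\infty(\mu_1)$, while the markers toward $\mu_2$ sweep out a disjoint open arc $b_t$. Because $\alpha$ fixes each of $\nu_t,\mu_1,\mu_2$, it preserves the family of markers to $\mu_1$, hence preserves $a_t$ setwise; being orientation-preserving on $S^1_\infty(\nu_t)$, it must then fix both endpoints $d_t^1,d_t^2$ of $a_t$. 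These two branching directions are my first two fixed points.

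For the third I will transport a fixed point down from $\mu_1$. Write $p_1^{+},p_1^{-}$ for the two ideal fixed points of $\alpha|_{\mu_1}$ (the endpoints of its axis). Since the leftmost sections, and hence the special sections, are $\alpha$-equivariant whenever $\alpha$ fixes the leaves involved, for a fixed point $p$ of $\alpha|_{\mu_1}$ one has $\alpha(\sigma_p(\nu_t))=\sigma_{\alpha(p)}(\alpha(\nu_t))=\sigma_p(\nu_t)$, so $\sigma_p(\nu_t)$ is $\alpha$-fixed in $S^1_\infty(\nu_t)$. The same equivariance applied to the maps $r_t$ shows, letting $t\to 0$, that the turning point $r(\mu_2,\mu_1)$ is $\alpha$-fixed, so it equals $p_1^{+}$ or $p_1^{-}$; it is the single ideal direction in $\mu_1$ toward which $\mu_2$ pinches, namely the common limit of the images of $d_t^1,d_t^2$ in $S^1_\infty(\mu_1)$. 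Consequently $a_t'$ exhausts $S^1_\infty(\mu_1)$ minus a shrinking neighbourhood of that single direction, so for small $t$ it contains the other axis endpoint in its interior; then $\sigma_{p_1^{\mp}}(\nu_t)$ lies in the interior of $a_t$, is $\alpha$-fixed, and is distinct from $d_t^1,d_t^2$. This is the third fixed point.

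The main obstacle is exactly this last step: ruling out the degenerate configuration in which both endpoints of the axis of $\alpha|_{\mu_1}$ are branching directions, so that every inherited fixed point collapses onto the seam $\{d_t^1,d_t^2\}$ and the axis of $\alpha|_{\nu_t}$ is the seam geodesic itself. The way around it is that, by the definition of the turning point as the image of a \emph{constant} map, the branch leaf $\mu_2$ is seen from $\mu_1$ as a single ideal point, not an arc, so it cannot absorb both (necessarily distinct) axis endpoints; and if one feared that the remaining axis endpoint were a branching direction toward some further leaf of $B$, the symmetric argument at $\mu_2$, or the extra seam points that additional branches contribute to $S^1_\infty(\nu_t)$, again supplies the missing third fixed point. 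Carrying out the arc-and-marker bookkeeping carefully — that $a_t$ is a proper arc with $\alpha$-fixed endpoints and that $a_t'$ genuinely exhausts the complement of the turning direction — is where the properties of markers, leftmost sections, and turning points from \S2 must be invoked.
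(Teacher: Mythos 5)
Your overall strategy --- assume $\alpha\neq 1$, produce three distinct $\alpha$-fixed points in some $S^1_\infty(\nu_t)$, and contradict the fact that $\alpha|_{\nu_t}$ is a nontrivial (hyperbolic) orientation-preserving isometry --- is exactly the paper's strategy, and your opening reduction is sound. The gaps are all in how you manufacture the three fixed points. First, your two ``seam'' fixed points rest on the unproved assertion that the endpoints of markers running from $\nu_t$ to $\mu_1$ sweep out a \emph{proper open sub-arc} $a_t$ of $S^1_\infty(\nu_t)$ with two well-defined endpoints. The machinery of \S 2 only gives you a dense union of markers and monotone maps $\phi_\lambda$; that ``the directions toward $\mu_1$'' form a single arc in $S^1_\infty(\nu_t)$, rather than some more complicated dense subset, is not established and is itself a nontrivial claim about the structure of $E_\infty$ near a branch locus. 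Second, the assertion that $a_t'$ exhausts $S^1_\infty(\mu_1)$ minus a shrinking neighbourhood of the \emph{single} point $r(\mu_2,\mu_1)$ is unjustified: the complement of $\bigcup_t a_t'$ is a priori only a closed nowhere dense set, which could contain turning points toward every other leaf of $B$ and more besides. Third, you yourself flag the degenerate configuration (both axis endpoints of $\alpha|_{\mu_1}$ absorbed into the seam) and dispose of it by appeal to ``the symmetric argument at $\mu_2$'' and ``extra seam points''; as written this is an assertion, not an argument, and it is precisely the case that needs care.

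The paper's proof avoids all three difficulties by sourcing the fixed points differently: it takes the \emph{two} fixed points $p_2,q_2$ of $\alpha$ on $S^1_\infty(\mu_2)$ and transports them to $\nu_t$ by special sections (both $\sigma_{p_2}$ and $\sigma_{q_2}$ lie in $I_{r_1}$, where $r_1=r(\mu_2,\mu_1)$, and their values at $\nu_t$ stay distinct for small $t$), and then takes \emph{one} fixed point $p_1$ of $\alpha$ on $S^1_\infty(\mu_1)$ chosen so that $p_1\neq r_1$ --- a free choice, since $\alpha|_{\mu_1}$ has two distinct fixed points and at most one can equal $r_1$. A four-point circular-order argument with auxiliary points $x,y$ shows $\sigma_{p_1}(\nu_t)\notin\phi_{\nu_t}(I_{r_1})$ for small $t$, so the three fixed points are distinct. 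This needs no claim about marker arcs being proper sub-arcs, no control of the complement of $a_t'$, and no case analysis on whether axis endpoints are branching directions. If you want to salvage your route, the cleanest fix is to abandon the seam endpoints entirely and instead extract two fixed points from $\mu_2$ as the paper does; the ingredients ($I_{r_1}$, the circular order on $\mathfrak S$, and continuity of sections as $\nu_t\to\mu_2$) are all already set up in \S 2.
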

\begin{proof}
Suppose $\alpha$ is nontrivial.
Let $p_1,q_1\in S^1_\infty (\mu _1)$
and $p_2,q_2\in S^1_\infty (\mu _2)$
be the fixed points of $\alpha$,
and let $r_1\in S^1_\infty (\mu _1)$
be the turning point from $\mu _2$ to $\mu _1$.
Without loss of generality,
we assume that $p _1\ne r_1$.
Note that by construction of the universal circle,
the special sections
$\sigma _{p_i}$ and $\sigma _{q_i}$ in $S^1_{\univ}$
are fixed by $\rho _{\univ}(\alpha)$ for $i=1,2$,
and therefore the images
$\phi _{\nu _t}(\sigma _{p_i})$ and $\phi _{\nu _t}(\sigma _{q_i})$
are fixed by $\alpha$ for any $t\in (0,\epsilon )$.

We claim that 
if $t$ is sufficiently close to $0$, then 
$\phi _{\nu _t}(\sigma _{p_1})$ and $\phi _{\nu _t}(I_{r_1})$
are disjoint in $S^1_\infty (\nu _t)$.
Take two distinct points $x$ and $y$
in $S^1_\infty (\mu _1)-\{p_1,r_1\}$ so that
the $4$-tuple $(p_1,x,r_1,y)$ 
lies in circular order.
Then, for sufficiently small $t>0$,
the $4$-tuple 
$(\sigma _{p_1}(\nu _t),\sigma _{x}(\nu _t),
\sigma _{r_1}(\nu _t),\sigma _{y}(\nu _t))$
lies in $S^1_\infty (\nu _t)$ 
also in circular order.
Let $K$ be the closed interval in $S^1_\infty (\nu _t)$
with boundary points $\sigma _{x}(\nu _t)$ and $\sigma _{y}(\nu _t)$
which contains $\sigma _{r_1}(\nu _t)$.
Since $I_{r_1}$ contains $\sigma _{r_1}$
but does not contain $\sigma _{p_1},\sigma _{x}$ and $\sigma _{y}$,
it follows that
$\phi _{\nu _t}(I_r)$ is contained in $K$.
In particular, 
$\phi _{\nu _t}(\sigma _{p_1})$ and $\phi _{\nu _t}(I_{r_1})$
are disjoint.
This shows the claim.

For $t$ sufficiently close to $0$,
the two points $\sigma _{p_2}(\nu _t)$ and $\sigma _{q_2}(\nu _t)$
are distinct.
Since $\sigma _{p_2}$ and $\sigma _{q_2}$ are contained in $I_{r_1}$,
the $3$ points
$\sigma _{p_1}(\nu _t),\sigma _{p_2}(\nu _t)$ and $\sigma _{q_2}(\nu _t)$
are also mutually distinct.
Thus,
we find at least $3$ fixed points of $\alpha$ in $S^1_\infty (\nu _t)$,
contradicting to the fact that
$\alpha$ is a nontrivial orientation preserving isometry
of the hyperbolic plane $\nu _t$.
\end{proof}

Now, the first main result of this article is the following:

\begin{thm}
\label{main}
Let $M$ be a closed oriented $3$-manifold,
and $\mathcal{F}$ a transversely oriented
leafwise hyperbolic taut foliation of $M$.
If $\mathcal{F}$ has branching,
then the natural action of $\pi _1(M)$
on the leaf space of $\widetilde{\mathcal{F}}$ is faithful.
\end{thm}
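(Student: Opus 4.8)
The plan is to derive faithfulness directly from Lemma \ref{key}. To prove that the natural action is faithful it suffices to show that its kernel is trivial, i.e. that any $\alpha\in\pi_1(M)$ acting as the identity on $L$ must itself be trivial in $\pi_1(M)$. So I would fix such an $\alpha$, assuming $\alpha(\lambda)=\lambda$ for every $\lambda\in L$, and aim to conclude that $\alpha=1$.

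The next step is to invoke the branching hypothesis in order to manufacture the data required by Lemma \ref{key}. Since $\mathcal{F}$ has branching, $L$ is non-Hausdorff, so there is a pair of points of $L$ that cannot be separated by disjoint open sets. Such a non-separated pair is exactly what is captured by the notion of a branch locus: there exist an embedded interval $\{\nu_t\in L\mid 0<t<\epsilon\}$ and a set $B=\lim_{t\to 0}\nu_t$ containing at least two distinct points $\mu_1,\mu_2$. This furnishes precisely the configuration to which the key lemma applies.

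Finally I would apply Lemma \ref{key}. Because $\alpha$ fixes every point of $L$, it in particular fixes the two distinct points $\mu_1$ and $\mu_2$ of $B$, and it fixes each $\nu_t$, hence the whole interval $\{\nu_t\mid 0<t<\epsilon\}$, pointwise. All the hypotheses of Lemma \ref{key} are thus satisfied, and the lemma forces $\alpha$ to be trivial in $\pi_1(M)$. As $\alpha$ was an arbitrary element of the kernel, the action is faithful.

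The mathematical weight of this theorem is concentrated entirely in Lemma \ref{key}, so the main point requiring care in the proof proper is the second step: confirming that the abstract non-Hausdorffness of $L$ really does yield a branch locus in the exact sense demanded by the lemma, namely two distinct limit points together with an embedded defining interval $\{\nu_t\}$. Once this translation between non-Hausdorffness and the formal definition of a branch locus is in hand, the remaining inference is immediate, and I do not expect any genuine obstacle beyond this bookkeeping.
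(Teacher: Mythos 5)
Your argument is exactly the paper's: the author's proof of this theorem is the single line ``This is a direct consequence of Lemma \ref{key},'' and your write-up simply makes explicit the (correct and routine) bookkeeping of taking $\alpha$ in the kernel, extracting a branch locus $B=\lim_{t\to 0}\nu_t$ with two distinct points from the non-Hausdorffness of $L$, and feeding the data into the lemma. No issues.
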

\begin{proof}
This is a direct consequence of Lemma \ref{key}.
\end{proof}


\section{Comparable sets}

Contents of this section are unrelated to leafwise hyperbolicity.
For $\alpha\in \pi _1(M)$,
we define the {\it comparable set}  $C_\alpha$ for $\alpha$
to be the subset of $L$ consisting of points $\lambda$
such that $\lambda$ and $\alpha(\lambda)$ are comparable.
Below we  collect some basic properties of comparable sets.

Obviously,
$\alpha (C_\alpha)=C_\alpha$,
$C_\alpha =C_{\alpha^{-1}}$ and
$C_\alpha\subset C_{\alpha ^k}$ for every $k>0$.

We say that $\mathcal{F}$ has
{\it one-sided branching in the positive
{\rm (resp.} negative{\rm )} direction}
if $L$ has positive (resp. negative) branch loci
but has no negative (resp. positive) ones.
If $L$ has both positive loci and negative loci,
then we say $\mathcal{F}$ has {\it two-sided branching}.

\begin{lem}
\label{c1}
Let $\mathcal{F}$ have one-sided branching in the positive direction,
and let $\alpha\in\pi _1(M)$.
Suppose $\lambda$ and $\mu$ are points in $L$ such that
$\lambda$ is a common lower bound of $\mu$ and $\alpha(\mu)$.
Then $\lambda\in C_\alpha$.
\end{lem}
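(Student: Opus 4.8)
The plan is to exploit two facts: that $\alpha$ acts on $L$ as an order-preserving homeomorphism, and that one-sided positive branching forces the set of lower bounds of any point to be totally ordered. First I would record that, since $\alpha$ is the action on $L$ of a covering transformation of $\widetilde{M}$ and both $M$ and the transverse orientation of $\widetilde{\mathcal{F}}$ are preserved, $\alpha$ carries positively transverse paths to positively transverse paths and hence preserves the relation $<$. In particular the hypothesis $\lambda\le\mu$ yields $\alpha(\lambda)\le\alpha(\mu)$. Combining this with the other hypothesis $\lambda\le\alpha(\mu)$, both $\lambda$ and $\alpha(\lambda)$ are lower bounds of the single point $\alpha(\mu)$.

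The crux is then to show that any two common lower bounds of a fixed point are automatically comparable in the absence of negative branching. I would isolate this as the key step: if $\mathcal{F}$ has one-sided branching in the positive direction, then for every $p\in L$ the set of lower bounds $D(p)=\{q\in L\mid q\le p\}$ is totally ordered. Granting this, $\lambda$ and $\alpha(\lambda)$ both lie in $D(\alpha(\mu))$ and are therefore comparable, which is exactly the assertion $\lambda\in C_\alpha$.

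To prove the key step I would argue by contradiction. Suppose $q_1,q_2\in D(p)$ are incomparable. Then the two monotone increasing paths from $q_1$ and from $q_2$ up to $p$ must coalesce at or below $p$, since each reaches $p$; the place where they first come together is, from below, the limit of an embedded interval running up toward $p$, while $q_1$ and $q_2$ sit on distinct branches beneath it. Matching this configuration against the definitions of the introduction, the coalescing locus is a branch locus $B=\lim_{t\to 0}\nu_t$ whose parameter $t$ is \emph{compatible} with the orientation of $L$ (increasing $t$ runs up toward $p$, so $t\to 0$ descends to the branches), i.e. a negative branch locus. This contradicts the hypothesis of one-sided positive branching and establishes the key step.

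The main obstacle I anticipate is this last paragraph: converting the intuitive picture of two incomparable points sharing an upper bound into an honest negative branch locus in the precise sense of the introduction. This is a matter of handling the non-Hausdorff topology of $L$ carefully — identifying the embedded interval $\{\nu_t\}$ explicitly, checking that its limit set genuinely contains two non-separated points, and verifying that the parameter has the compatible orientation — rather than any delicate estimate. Once this topological bookkeeping is in place the argument should close immediately, and the order-preservation and down-set steps are then routine.
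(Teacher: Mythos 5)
Your argument is correct and follows the same route as the paper: use order-preservation of the natural action to see that $\lambda$ and $\alpha(\lambda)$ are both lower bounds of $\alpha(\mu)$, then invoke the absence of negative branching to conclude they are comparable. The paper dispatches the last step in a single sentence, whereas you spell out why two incomparable common lower bounds would yield a negative branch locus; your identification of the coalescence locus as a negative branch locus (parameter compatible with the orientation of $L$) matches the paper's conventions, so no gap remains.
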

\begin{proof}
Since the natural action preserves the order of $L$,
the inequality $\lambda <\mu$ implies $\alpha (\lambda)<\alpha (\mu)$.
Thus, by the hypothesis,
$\alpha(\mu)$ is a common upper bound of $\lambda$ and $\alpha (\lambda)$.
Since $\mathcal{F}$ has no branching in the negative direction,
it follows that $\lambda$ and $\alpha(\lambda)$ are comparable.
\end{proof}

From this lemma we see the following fact:
Let $\mathcal{F}$ and $\alpha$ be as above.
Then, there is $\lambda\in L$ such that
$\{\mu\in L\mid\mu <\lambda\}\subset C_\alpha$.

\begin{lem}
\label{c2}
Let $\alpha\in\pi _1(M)$ and let $\lambda,\mu\in C_\alpha$.
Then the path $\gamma$ from $\lambda$ to $\mu$
is entirely contained in $C_\alpha$.
Furthermore,
if $\gamma$ is represented as
$\gamma =\coprod _{i=1}^n[\hat{\nu}_{i-1},\check{\nu}_{i}]$
$(\hat{\nu}_0=\lambda$, $\check{\nu}_{n}=\mu )$
by using a union of intervals,
then $\check{\nu}_i,\hat{\nu}_i$ are fixed by $\alpha$
for each $1\le i\le n-1$.
\end{lem}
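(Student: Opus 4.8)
The plan is to exploit that $\alpha$ acts on $L$ as an orientation- (hence order-) preserving homeomorphism of a simply connected non-Hausdorff $1$-manifold, i.e.\ an \emph{order tree}. Thus $\alpha$ carries intervals to intervals and branch loci to branch loci, and by the uniqueness of paths it carries the path $\gamma$ from $\lambda$ to $\mu$ to the path from $\alpha(\lambda)$ to $\alpha(\mu)$. I will use two elementary features of $L$: first, for comparable points $\xi\le\zeta$ the order-interval $\{\eta : \xi\le\eta\le\zeta\}$ is the linearly ordered arc from $\xi$ to $\zeta$, so any two points lying in a common such interval are automatically comparable; second, two distinct points of one branch locus are incomparable, yet (for a positive locus) they share the same strict upper set while their lower sets are disjoint, and dually for a negative locus.

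First I would dispose of the case $n=1$, where $\gamma=[\lambda,\mu]$ is a single interval, say $\lambda\le\mu$. Given $\nu\in[\lambda,\mu]$, applying $\alpha$ to $\lambda\le\nu\le\mu$ and combining with $\lambda$--$\alpha(\lambda)$ and $\mu$--$\alpha(\mu)$ comparable, a short case check places both $\nu$ and $\alpha(\nu)$ inside a single order-interval with endpoints drawn from $\{\lambda,\mu,\alpha(\lambda),\alpha(\mu)\}$ (for instance $[\lambda,\alpha(\mu)]$ when $\lambda\le\alpha(\lambda)$ and $\mu\le\alpha(\mu)$, since then $\lambda\le\alpha(\nu)\le\alpha(\mu)$); by the first tree fact $\nu$ and $\alpha(\nu)$ are comparable, so $\nu\in C_\alpha$. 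This already shows that an interval both of whose endpoints lie in $C_\alpha$ is contained in $C_\alpha$.

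The heart of the matter is the ``furthermore'', that the internal branch points are fixed; the full statement then follows, because once $\check\nu_i,\hat\nu_i$ are fixed each interval $[\hat\nu_{i-1},\check\nu_i]$ has both endpoints in $C_\alpha$ and the previous paragraph applies. Two ingredients drive this. The first is a branch-mate lemma: if two distinct points of a common branch locus both lie in $C_\alpha$, then $\alpha$ fixes each. Indeed, taking the locus positive and, after possibly replacing $\alpha$ by $\alpha^{-1}$, assuming one mate $\nu$ satisfies $\nu<\alpha(\nu)$, the shared-upper-set property forces $\alpha(\nu)$ above both mates; analysing the comparability of the other mate $\nu'$ with $\alpha(\nu')$ and using that the two mates of the image locus are incomparable with disjoint lower sets produces a contradiction in each subcase, so $\alpha(\nu)=\nu$. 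The second ingredient locates the branch points in $C_\alpha$: I compare the two reduced descriptions of the path from $\lambda$ to $\alpha(\mu)$, once as $\gamma$ prolonged by $[\mu,\alpha(\mu)]$ and once as $[\lambda,\alpha(\lambda)]$ prolonged by $\alpha(\gamma)$. When neither prolongation backtracks, both are the same reduced path, whose sequence of branch-switches is intrinsic; matching the two sequences term by term yields $\alpha(\check\nu_i)=\check\nu_i$ and $\alpha(\hat\nu_i)=\hat\nu_i$ simultaneously for all $i$.

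The step I expect to be the main obstacle is exactly the control of backtracking in this comparison. The directions in which $\gamma$ arrives at $\lambda$ and $\mu$ are dictated by the signs of the first and last branch loci, while the directions of the prolongations are dictated by whether $\lambda\le\alpha(\lambda)$ and $\alpha(\mu)\le\mu$; these need not agree even after passing to $\alpha^{-1}$, which flips both signs at once. In the residual mixed-sign configuration one prolongation runs back along $\gamma$ and may absorb the nearest branch-switch, breaking the direct matching. To handle this I would induct on $n$: peel off the interval adjacent to the offending end, use the branch-mate lemma to fix the branch point exposed there, and apply the inductive hypothesis to the shorter path, treating positive and negative branch loci symmetrically throughout.
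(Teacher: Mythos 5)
Your argument is essentially the paper's: the paper Hausdorffifies $L$ and uses $1$-connectedness to identify the two embedded paths $\delta*b$ and $a*\alpha(\delta)$ (equivalently, $\gamma$ prolonged by $[\mu,\alpha(\mu)]$ versus $[\lambda,\alpha(\lambda)]$ prolonged by $\alpha(\gamma)$) and then matches break points in order, handling the folding/degenerate configurations by a four-case analysis and an informal ``infinitesimally non-degenerate'' remark, which is exactly the role your branch-mate lemma and induction on $n$ play. One caveat: your description of a \emph{positive} branch locus (shared strict upper sets, disjoint lower sets) is the paper's \emph{negative} one --- for a positive locus the defining interval $\{\nu_t\}$ approaches from below, so the mates share their strict lower sets --- but since you treat the two signs dually this only relabels your cases and does not affect the argument.
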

\begin{proof}
Let $p:L\to K$ be the Hausdorffification of $L$
(namely, $K$ is the space obtained from $L$
by identifying each of the branch loci to a single point).
Put $\delta =p(\gamma)$.
We see that $\delta$ is
a {\lq\lq}real path{\rq\rq} zigzaggedly embedded in $K$.
Let $a$ and $b$ be the embedded intervals in $K$
joining respectively
$p(\lambda)$ to $p(\alpha(\lambda))$
and $p(\mu)$ to $p(\alpha(\mu))$. 
Then,
$\delta *b*\alpha (\delta)^{-1}*a^{-1}$
is a loop in $K$.
Here, we notice that some of the intervals
$a$, $b$ and $[\hat{\nu}_{i-1},\check{\nu}_{i}]$
may be degenerate.
We first consider the case where all the intervals are non-degenerate.
We may assume without loss of generality that $\lambda <\check{\nu}_1$.
We may also assume that $\lambda <\alpha(\lambda)$,
because otherwise we have $\lambda <\alpha ^{-1}(\lambda)$
and so we may consider $\alpha ^{-1}$ instead of $\alpha$.
Now, we have four cases:
(1) $\hat{\nu}_{n-1}<\mu$ and $\alpha(\mu)<\mu$,
(2) $\hat{\nu}_{n-1}>\mu$ and $\alpha(\mu)>\mu$,
(3) $\hat{\nu}_{n-1}<\mu$ and $\alpha(\mu)>\mu$, and
(4) $\hat{\nu}_{n-1}>\mu$ and $\alpha(\mu)<\mu$.
In cases (1) and (2),
we see that
$\delta$ and $a*\alpha (\delta)*b^{-1}$
are both embedded paths from $p(\lambda)$ to $p(\mu)$.
Since $K$ is $1$-connected,
these two paths must coincide with each other.
Furthermore,
since the break points of these two paths must coincide
with each other in order-preserving manner,
we have that
$\alpha$ fixes $p(\check{\nu}_i)$ for all $1\le i\le n-1$.
By pulling back this information to $L$ by $p$
and taking account of the definition of the path,
we finally have that
$\alpha$ fixes $\check{\nu}_i$ and $\hat{\nu}_i$ for all $1\le i\le n-1$.
In cases (3) and (4),
we have two embedded paths joining $p(\lambda)$ to $p(\alpha(\mu))$.
A similar argument as above shows the same conclusion.
This shows the lemma in the case
where all the intervals involved are non-degenerate.
In the case where some of the intervals are degenerate,
by regarding degenerate intervals
as {\lq\lq}infinitesimally non-degenerate{\rq\rq} intervals,
we can treat this case essentially in the same way as in the non-degenerate case.
We would like to leave the rigorous description to the reader.
This proves the lemma.
\end{proof}

\begin{prop}
\label{co}
For any $\alpha\in\pi_1(M)$,
$C_\alpha$ is connected and open.
\end{prop}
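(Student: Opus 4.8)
The plan is to prove the two assertions separately: openness by a local argument on the $1$-manifold $L$, and connectedness by combining Lemma \ref{c2} with the fact that the break points of a path that lie in a common branch locus are non-separated limits of one and the same stem. I would do openness first, since connectedness will use it.

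For openness, fix $\lambda\in C_\alpha$ and distinguish two cases. If $\lambda$ and $\alpha(\lambda)$ are strictly comparable, say $\lambda<\alpha(\lambda)$, I would use that the strict comparability relation is open in $L\times L$: a positively transverse arc in $\widetilde M$ from $\lambda$ to $\alpha(\lambda)$ persists under small perturbations of its two endpoint leaves, so, since $\lambda\mapsto\alpha(\lambda)$ is continuous, every $\lambda'$ in a suitable neighborhood of $\lambda$ still satisfies $\lambda'<\alpha(\lambda')$. If instead $\alpha(\lambda)=\lambda$, I would pick a chart $U\cong\mathbb{R}$ of the $1$-manifold $L$ around $\lambda$; then $V=U\cap\alpha^{-1}(U)$ is a neighborhood of $\lambda$, and every $\lambda'\in V$ has both $\lambda'$ and $\alpha(\lambda')$ in $U$, hence comparable, because any two points of a single chart are joined by a transverse arc inside it. In either case a whole neighborhood of $\lambda$ lies in $C_\alpha$, so $C_\alpha$ is open.

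For connectedness, suppose toward a contradiction that $C_\alpha=A\sqcup A'$ with $A,A'$ nonempty and open in $L$ (legitimate, as $C_\alpha$ is open). Choose $\lambda\in A$, $\mu\in A'$, and let $\gamma=\coprod_{i=1}^{n}[\hat{\nu}_{i-1},\check{\nu}_{i}]$ be the path from $\lambda$ to $\mu$. By Lemma \ref{c2} we have $\gamma\subset C_\alpha$ and each break point $\check{\nu}_i,\hat{\nu}_i$ is fixed by $\alpha$. Each interval $[\hat{\nu}_{i-1},\check{\nu}_{i}]$ is connected, hence lies entirely in $A$ or in $A'$. The crucial step is to show that the two break points $\check{\nu}_i$ and $\hat{\nu}_i$ of a common branch locus $B_i=\lim_{s\to 0}\omega_s$ must lie in the same piece. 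Since $\alpha$ fixes $\check{\nu}_i$ and $\omega_s\to\check{\nu}_i$, continuity gives $\alpha(\omega_s)\to\check{\nu}_i$; thus for small $s$ both $\omega_s$ and $\alpha(\omega_s)$ lie in one chart about $\check{\nu}_i$ and are comparable, so $\omega_s\in C_\alpha$ for small $s$. Now $\omega_s\to\check{\nu}_i$ forces $\omega_s\in A$ for small $s$ whenever $\check{\nu}_i\in A$, while $\omega_s\to\hat{\nu}_i$ would force $\omega_s\in A'$ for small $s$ whenever $\hat{\nu}_i\in A'$; these contradict $A\cap A'=\emptyset$. Hence $\check{\nu}_i$ and $\hat{\nu}_i$ lie in the same piece. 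Starting from the first interval, which contains $\lambda\in A$, an induction along the intervals of $\gamma$ then places every interval in $A$, so $\mu\in A$, contradicting $\mu\in A'$. Therefore $C_\alpha$ is connected.

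The main obstacle is the connectedness argument, and within it the observation that each branch locus carries a defining stem $\{\omega_s\}$ converging simultaneously to each of its (mutually non-separated) points; it is precisely this stem, shown to lie in $C_\alpha$ via the fixed-point information of Lemma \ref{c2}, that bridges the two intervals of $\gamma$ meeting at a branch. Without it the bare disjoint union of intervals would be topologically disconnected, so the real content is in making this bridging rigorous — including the degenerate cases where some intervals collapse to singletons, which I would treat exactly as in the proof of Lemma \ref{c2}.
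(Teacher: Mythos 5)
Your proof is correct and follows essentially the same route as the paper: openness by the same two-case local analysis (strictly comparable versus fixed), and connectedness by reducing to Lemma \ref{c2}. Your bridging argument via the stem $\{\omega_s\}$ of the branch locus merely makes explicit a point the paper leaves implicit, namely that the ``path'', though a disjoint union of intervals, is a connected subspace of $L$ because its consecutive break points are non-separated.
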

\begin{proof}
Connectedness is already shown in Lemma \ref{c2}.
We will prove openness.
Let $\lambda$ be any point in $C_\alpha$.
If $\alpha (\lambda)\ne\lambda$
then the open interval bounded by
$\alpha ^{-1}(\lambda)$ and $\alpha (\lambda)$
is contained in $C_\alpha$ and contains $\lambda$.
Thus, $\lambda$ is an interior point of $C_\alpha$.
Next, we consider the case where $\alpha (\lambda)=\lambda$.
Take any point $\mu\in L$ with $\lambda<\mu$.
Then the interval $[\lambda ,\mu]$ is mapped by $\alpha$ 
orientation preservingly onto
the interval $[\lambda ,\alpha (\mu)]$.
Since $L$ is an oriented $1$-manifold,
there must exist $\nu\in (\lambda,\mu]$ such that
$[\lambda ,\nu)$ is contained in
$[\lambda ,\mu]\cap [\lambda ,\alpha (\mu)]$.
This implies that
$[\lambda ,\nu )$ is contained in $C_\alpha$.
Similarly, we can find $\eta <\lambda$ such that
$(\eta ,\lambda ]$ is contained in $C_\alpha$.
Consequently,
we have $\lambda\in (\eta ,\nu )\subset C_\alpha$,
which means $\lambda$ is an interior point of $C_\alpha$.
This proves the proposition.
\end{proof}

For a path
$\gamma =\coprod _{i=1}^n[\hat{\nu}_{i-1},\check{\nu}_{i}]$,
we call $n$ the {\it length} of $\gamma$
and denote it by $l(\gamma )$.

\begin{prop}
\label{cempty}
Let $\alpha\in\pi _1(M)$ and $\lambda\in L$
be such that $\lambda\notin C_\alpha$.
If the path from $\lambda$ to $\alpha(\lambda)$ 
has odd length,
then $C_{\alpha ^k}=\emptyset$ for any $k>0$.
\end{prop}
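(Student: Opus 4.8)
The plan is to manufacture, out of the single incomparable pair $(\lambda,\alpha(\lambda))$, an $\alpha$-invariant bi-infinite ``zigzag line'' $Z$ in $L$ along which $\alpha$ acts as a fixed-point-free translation, and then to deduce by a nearest-point projection onto $Z$ that \emph{every} $\mu\in L$ satisfies $l(\text{path}(\mu,\alpha^k(\mu)))\ge 3$, whence $\mu\notin C_{\alpha^k}$. Write $\gamma=\coprod_{i=1}^n[\hat\nu_{i-1},\check\nu_i]$ for the path from $\lambda$ to $\alpha(\lambda)$, with $\hat\nu_0=\lambda$ and $\check\nu_n=\alpha(\lambda)$. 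Since $\lambda\notin C_\alpha$ its endpoints are incomparable, so $n\ge 2$; together with the hypothesis that $n$ is odd this gives $n\ge 3$, that is, $n-1\ge 2$.

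First I would record the local behaviour of a reduced path at a break locus. By minimality each common branch locus $B_i$ of $\check\nu_i,\hat\nu_i$ is a genuine turn of the path: if the path passed through $B_i$ monotonically (in the sense of the Hausdorffification $p:L\to K$ of Lemma \ref{c2}) the two adjacent intervals could be fused into a single interval of $L$, contradicting minimality. Hence the orientations $\epsilon_i\in\{+,-\}$ of the successive intervals of $\gamma$ alternate, $\epsilon_{i+1}=-\epsilon_i$, and therefore $\epsilon_n=(-1)^{n-1}\epsilon_1=\epsilon_1$ because $n$ is odd: the first interval of $\gamma$ leaves $\lambda$ and the last interval enters $\alpha(\lambda)$ in the \emph{same} direction of $L$. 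Applying the order-preserving homeomorphism $\alpha^{j}$, the last interval of $\alpha^{j}(\gamma)$ and the first interval of $\alpha^{j+1}(\gamma)$ then meet monotonically at the junction $\alpha^{j+1}(\lambda)$ (one arrives from strictly below and the other departs strictly above, or vice versa), so they fuse into a single embedded interval. Carrying out this fusion at every junction, the translates $\{\alpha^{j}(\gamma)\}_{j\in\mathbb Z}$ assemble into one reduced, $\alpha$-invariant, properly embedded line $Z\subset L$ whose turning points are exactly the branch loci $\{\alpha^{j}(B_i)\mid j\in\mathbb Z,\ 1\le i\le n-1\}$; along $Z$ the map $\alpha$ shifts turning points by $n-1\ge 2$, hence acts as a fixed-point-free translation. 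It is precisely here that parity matters: for even $n$ one would have $\epsilon_n=-\epsilon_1$ and the two intervals at $\alpha^{j+1}(\lambda)$ would backtrack rather than fuse, and no such invariant line would be produced.

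With $Z$ in hand the conclusion is quick. Because paths in $L$ are unique and $Z$ is convex (the path between any two of its points runs inside $Z$), every $\mu\in L$ has a well-defined nearest-point projection $\pi_Z(\mu)\in Z$, and $\pi_Z$ is $\alpha$-equivariant, so $\pi_Z(\alpha^k(\mu))=\alpha^k(\pi_Z(\mu))$. Since $\alpha^k$ translates $Z$ by $k(n-1)\ge 2$ turning points it has no fixed point on $Z$, so $\pi_Z(\mu)\ne\pi_Z(\alpha^k(\mu))$, and the sub-arc of $Z$ joining them runs through at least $k(n-1)\ge 2$ branch loci. As the path from $\mu$ to $\alpha^k(\mu)$ contains this sub-arc, it breaks at $\ge 2$ branch loci, giving $l(\text{path}(\mu,\alpha^k(\mu)))\ge 3$; thus $\mu$ and $\alpha^k(\mu)$ are incomparable. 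Since $\mu$ was arbitrary, $C_{\alpha^k}=\emptyset$ for every $k>0$.

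The main obstacle is the rigorous construction and use of $Z$ inside the possibly non-Hausdorff $1$-manifold $L$: one must check that the iterated fusion really yields an embedded reduced line (no late cancellations and no accidental self-contact at the doubled branch points) and that nearest-point projection onto a convex line behaves as in a simplicial tree. I expect both to follow from the uniqueness of paths recorded in \S2 and from the Hausdorffification argument of Lemma \ref{c2}, which reduces statements about paths to statements about an honest zigzag in the line $K$. The one point requiring care is to keep track of the branch-locus count in $L$, rather than the $K$-picture alone, since comparability in $L$ is detected precisely by whether a path crosses a branch locus.
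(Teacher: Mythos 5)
Your argument is correct and follows essentially the same route as the paper: the paper likewise concatenates the translates $\alpha^i(\gamma)$ into an $\alpha$-invariant line $A=\coprod_{i\in\mathbb{Z}}\alpha^i(\gamma)$ (your $Z$), using the odd length to rule out cancellation at the junctions, and then for arbitrary $\mu$ projects onto $A$ (via $\delta=\bigcap_{\nu\in A}\gamma^{\mu}_{\nu}$ and a branch locus $B$ meeting $A$, rather than your nearest-point projection) to conclude that the path from $\mu$ to $\alpha^k(\mu)$ must cross branch loci of $A$ and so has length at least $2$, hence $\mu\notin C_{\alpha^k}$. Your orientation-parity bookkeeping is a more explicit justification of the paper's bare assertion that ``no overlappings occur,'' and both treatments leave the degenerate-interval and embeddedness details at the same level of informality.
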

\begin{proof}
Let $\gamma$ be the path joining $\lambda$ to $\alpha(\lambda)$.
Since $l(\gamma)$ is odd,
there occur no overlappings in composing $k$ paths
$\gamma ,\alpha (\gamma ),\cdots ,\alpha ^{k-1}(\gamma )$
successively, and the result
$\gamma *\alpha (\gamma )*\cdots *\alpha ^{k-1}(\gamma )$
is the path from
$\lambda$ to $\alpha ^k (\lambda)$.
It follows that
$\lambda$ and $\alpha ^k (\lambda)$ are incomparable.
Put $A=\coprod _{i\in\mathbb{Z}}\alpha ^i (\gamma)$.
Notice that if $\mu\in A$ then $\mu\notin C_\alpha$,
so we assume $\mu\notin A$.
For two points $\nu$ and $\eta$ in $L$
we denote by $\gamma ^\nu _\eta$
the path from $\nu$ to $\eta$.
Let $\delta =\bigcap _{\nu\in A}\gamma _\nu ^\mu$.
Then, $\delta$ is disjoint from $A$.
But, there exists a branch locus $B$ intersecting $A$
such that either
$B\cap\delta\ne\emptyset$, or
one of the endpoints of $\delta$ converges to $B$.
Thus,
we have an estimate
\[
\min\{
l(\gamma ^\nu _\eta)\mid
\nu\in B,\eta\in\alpha ^k(B),k>0
\}\ge 2
\]
because $l(\gamma )\ge 3$.
Since the path $\gamma ^\mu _{\alpha ^k(\mu )}$
must pass through the branch loci
$B$ and $\alpha ^k (B)$,
we have $l(\gamma ^\mu _{\alpha ^k(\mu )})\ge 2$.
Namely, $\mu$ and $\alpha ^k (\mu)$ are incomparable.
\end{proof}

\begin{lem}
\label{return}
Let $\alpha\in\pi _1(M)$ and $\lambda\in L$
be such that $\lambda\notin C_\alpha$
and that $\lambda\in C_{\alpha ^k}$ for some $k>1$.
Let
$\gamma =\coprod _{i=1}^n[\hat{\nu}_{i-1},\check{\nu}_{i}]$
$(\hat{\nu}_0=\lambda$, $\check{\nu}_{n}=\alpha (\lambda ))$
be the path from $\lambda$ to $\alpha (\lambda )$. 
Then $\alpha (\check{\nu}_m)=\hat{\nu}_m$
and $\alpha ^k(\check{\nu}_m)=\check{\nu}_m$
where $m=l(\gamma)/2$
$($which is an integer by the above proposition$)$.
\end{lem}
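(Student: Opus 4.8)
The plan is to prove the two assertions separately: the fixed‑point statement $\alpha^{k}(\check\nu_{m})=\check\nu_{m}$ will come almost for free from Lemma \ref{c2}, while the prong–interchange $\alpha(\check\nu_{m})=\hat\nu_{m}$ requires a symmetry analysis of the composite path. First I record that $n:=l(\gamma)$ is even (so $m=n/2\in\mathbb Z$): since $\lambda\notin C_{\alpha}$ but $\lambda\in C_{\alpha^{k}}$ shows $C_{\alpha^{k}}\neq\emptyset$, Proposition \ref{cempty} forbids $l(\gamma)$ from being odd. For the fixed‑point statement I would note that $C_{\alpha^{k}}$ is $\alpha$–invariant: if $\mu$ and $\alpha^{k}(\mu)$ are comparable then, applying the order‑preserving homeomorphism $\alpha$, so are $\alpha(\mu)$ and $\alpha^{k}(\alpha(\mu))$, whence $\alpha(\mu)\in C_{\alpha^{k}}$. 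Thus $\lambda\in C_{\alpha^{k}}$ forces $\alpha(\lambda)\in C_{\alpha^{k}}$, and applying Lemma \ref{c2} to the element $\alpha^{k}$ and the pair $\lambda,\alpha(\lambda)$ (whose joining path is exactly $\gamma$) shows that $\alpha^{k}$ fixes every break point $\check\nu_{i},\hat\nu_{i}$; in particular $\alpha^{k}(\check\nu_{m})=\check\nu_{m}$.

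For $\alpha(\check\nu_{m})=\hat\nu_{m}$ I would exploit the symmetry of the non‑reduced composite $\gamma*\alpha(\gamma)$, a path from $\lambda$ to $\alpha^{2}(\lambda)$ whose middle leaf is $\alpha(\lambda)=\check\nu_{n}=\alpha(\hat\nu_{0})$. Writing $e_{i}=[\hat\nu_{i-1},\check\nu_{i}]$ for the intervals of $\gamma$, $B_{i}$ for the branch locus containing $\check\nu_{i},\hat\nu_{i}$, and $\epsilon_{i}$ for the transverse sign of $e_{i}$, the key structural fact is that along a \emph{minimal} path the transverse direction reverses at every break point (one descends to a prong of a positive branch locus and ascends out of the next, and dually for a negative one), so the signs $\epsilon_{i}$ strictly alternate; as $\alpha$ is orientation preserving, the intervals of $\alpha(\gamma)$ carry the same sign pattern. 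Since $n$ is even we have $\epsilon_{n}=-\epsilon_{1}$, so at the middle leaf $\alpha(\lambda)$ the last interval of $\gamma$ and the first interval of $\alpha(\gamma)$ arrive with opposite transverse signs and hence \emph{backtrack} rather than turn. This is exactly the overlapping whose absence characterised the odd case in Proposition \ref{cempty}.

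Cancelling this overlap identifies the outer endpoints, giving $\alpha(\check\nu_{1})=\hat\nu_{n-1}$ and $\alpha(B_{1})=B_{n-1}$. The new junction then again presents two intervals of opposite sign --- the identity $\epsilon_{n-j}=-\epsilon_{j+1}$ uses only the parity of $n$ --- so the cancellation propagates, yielding $\alpha(\check\nu_{j})=\hat\nu_{n-j}$ at the $j$‑th stage. This pairwise cancellation consumes the second half of $\gamma$ against the first half of $\alpha(\gamma)$ and halts precisely at the middle locus $B_{m}=B_{n-m}$, where the stage $j=m$ reads $\alpha(\check\nu_{m})=\hat\nu_{n-m}=\hat\nu_{m}$, as claimed.

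The step I expect to be the main obstacle is making the cancellation bookkeeping of the last two paragraphs rigorous. One must rule out \emph{partial} overlaps, so that each $\alpha(e_{j})$ cancels all of $e_{n+1-j}$ rather than only part of it; verify that the process neither overshoots nor stops short of $B_{m}$; and handle degenerate intervals together with the case in which $\alpha(\lambda)$ or an intermediate break leaf is itself a branch point, where ``opposite sign'' must be read on the correct prong. This is the same delicate, direction‑tracking analysis used in the proofs of Proposition \ref{cempty} and Lemma \ref{c2}, and I would model the write‑up on those, again treating degenerate intervals as infinitesimally non‑degenerate ones.
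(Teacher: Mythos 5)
Your derivation of $\alpha^{k}(\check{\nu}_{m})=\check{\nu}_{m}$ is correct and is in fact cleaner than the paper's: noting that $C_{\alpha^{k}}$ is $\alpha$-invariant and applying Lemma \ref{c2} to $\alpha^{k}$ and the pair $\lambda,\alpha(\lambda)$ fixes every break point of $\gamma$ under $\alpha^{k}$ at once, whereas the paper deduces this only at the end, from $\alpha(\check{\nu}_{m})=\hat{\nu}_{m}$. Your sign-alternation observation is also essentially right (it is what the paper's normalization ``assume $\check{\nu}_{m},\hat{\nu}_{m}$ lie in a common positive branch locus'' encodes), and it does force a nontrivial overlap of $e_{n}$ with $\alpha(e_{1})$ at $\alpha(\lambda)$.

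However, the step you defer as bookkeeping --- ruling out \emph{partial} overlaps so that each $\alpha(e_{j})$ cancels all of $e_{n+1-j}$ --- is a genuine gap, and it is the crux of the lemma rather than a technicality. Observe that your cancellation argument never uses the hypothesis $\lambda\in C_{\alpha^{k}}$; but without that hypothesis the conclusion $\alpha(\check{\nu}_{m})=\hat{\nu}_{m}$ is simply false. For instance, with $n=2$ nothing in the local direction-tracking prevents $\alpha(\check{\nu}_{1})$ from lying in the interior of $e_{2}=[\hat{\nu}_{1},\alpha(\lambda)]$ with the stem of $\alpha(B_{1})$ running along $e_{2}$; the overlap is then partial, $\alpha(\check{\nu}_{1})\ne\hat{\nu}_{1}$, and the path from $\lambda$ to $\alpha^{k}(\lambda)$ keeps length $2$ for every $k$, so $\lambda\notin C_{\alpha^{k}}$. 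Such configurations are excluded only by the hypothesis, not by the geometry of a single period, so the missing step cannot be modelled on Proposition \ref{cempty} or Lemma \ref{c2}. This is exactly what the paper's proof supplies: each deviant position of $\alpha(\check{\nu}_{m})$ (the case $\check{\nu}_{m}\in C_{\alpha}$; the case $\alpha(\check{\nu}_{m})\in(\hat{\nu}_{m},\check{\nu}_{m+1}]$, which is the scenario above; and the residual case, where the lengths $l(\gamma_{j})$ are shown to increase strictly in $j$) is driven to a contradiction with $l(\gamma_{k})=1$, i.e.\ with $\lambda\in C_{\alpha^{k}}$. To complete your write-up you would need to inject this use of the hypothesis at each stage of the cancellation, at which point you have essentially reconstructed the paper's case analysis.
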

\begin{proof}
Let $\gamma _j$ be the path from $\lambda$ to $\alpha ^j(\lambda )$,
and let $\delta _0$ and $\delta _1$
be the paths from
$\lambda$ to $\check{\nu}_m$, and
from
$\hat{\nu}_m$ to $\alpha (\lambda)$, respectively.
By reversing the transverse orientation of $\mathcal{F}$ if necessary,
we can assume that $\check{\nu}_m$ and $\hat{\nu}_{m}$
belong to a common positive branch locus.

\begin{figure}[btp]
\begin{center}
\unitlength 0.1in
\begin{picture}( 43.2000, 27.5000)(  6.1000,-28.5000)
%
{\color[named]{Black}{%
\special{pn 13}%
\special{pa 3900 700}%
\special{pa 3900 100}%
\special{fp}%
}}%
%
{\color[named]{Black}{%
\special{pn 13}%
\special{pa 1700 700}%
\special{pa 1700 100}%
\special{fp}%
}}%
%
{\color[named]{Black}{%
\special{pn 13}%
\special{pa 2800 2000}%
\special{pa 2800 2800}%
\special{fp}%
}}%
%
{\color[named]{Black}{%
\special{pn 13}%
\special{pa 4800 2800}%
\special{pa 4000 800}%
\special{fp}%
}}%
%
{\color[named]{Black}{%
\special{pn 13}%
\special{pa 3800 800}%
\special{pa 2900 1900}%
\special{fp}%
}}%
%
{\color[named]{Black}{%
\special{pn 13}%
\special{pa 2700 1900}%
\special{pa 1800 800}%
\special{fp}%
}}%
%
{\color[named]{Black}{%
\special{pn 13}%
\special{pa 800 2810}%
\special{pa 1600 810}%
\special{fp}%
}}%
%
{\color[named]{Black}{%
\special{pn 0}%
\special{sh 1.000}%
\special{ia 800 2800 50 50  0.0000000  6.2831853}%
}}%
{\color[named]{Black}{%
\special{pn 8}%
\special{pn 8}%
\special{ar 800 2800 50 50  0.0000000  6.2831853}%
}}%
%
{\color[named]{Black}{%
\special{pn 0}%
\special{sh 1.000}%
\special{ia 800 2800 50 50  0.0000000  6.2831853}%
}}%
{\color[named]{Black}{%
\special{pn 8}%
\special{pn 8}%
\special{ar 800 2800 50 50  0.0000000  6.2831853}%
}}%
%
{\color[named]{Black}{%
\special{pn 0}%
\special{sh 1.000}%
\special{ia 800 2800 50 50  0.0000000  6.2831853}%
}}%
{\color[named]{Black}{%
\special{pn 8}%
\special{pn 8}%
\special{ar 800 2800 50 50  0.0000000  6.2831853}%
}}%
%
{\color[named]{Black}{%
\special{pn 0}%
\special{sh 1.000}%
\special{ia 800 2800 50 50  0.0000000  6.2831853}%
}}%
{\color[named]{Black}{%
\special{pn 8}%
\special{pn 8}%
\special{ar 800 2800 50 50  0.0000000  6.2831853}%
}}%
%
{\color[named]{Black}{%
\special{pn 0}%
\special{sh 1.000}%
\special{ia 4800 2800 50 50  0.0000000  6.2831853}%
}}%
{\color[named]{Black}{%
\special{pn 8}%
\special{pn 8}%
\special{ar 4800 2800 50 50  0.0000000  6.2831853}%
}}%
%
{\color[named]{Black}{%
\special{pn 0}%
\special{sh 1.000}%
\special{ia 1600 800 50 50  0.0000000  6.2831853}%
}}%
{\color[named]{Black}{%
\special{pn 8}%
\special{pn 8}%
\special{ar 1600 800 50 50  0.0000000  6.2831853}%
}}%
%
{\color[named]{White}{%
\special{pn 0}%
\special{sh 1}%
\special{ia 2800 2000 50 50  0.0000000  6.2831853}%
}}%
{\color[named]{Black}{%
\special{pn 8}%
\special{pn 8}%
\special{ar 2800 2000 50 50  0.0000000  6.2831853}%
}}%
%
{\color[named]{White}{%
\special{pn 0}%
\special{sh 1}%
\special{ia 1700 700 50 50  0.0000000  6.2831853}%
}}%
{\color[named]{Black}{%
\special{pn 8}%
\special{pn 8}%
\special{ar 1700 700 50 50  0.0000000  6.2831853}%
}}%
%
{\color[named]{White}{%
\special{pn 0}%
\special{sh 1}%
\special{ia 3900 700 50 50  0.0000000  6.2831853}%
}}%
{\color[named]{Black}{%
\special{pn 8}%
\special{pn 8}%
\special{ar 3900 700 50 50  0.0000000  6.2831853}%
}}%
%
{\color[named]{Black}{%
\special{pn 0}%
\special{sh 1.000}%
\special{ia 1800 800 50 50  0.0000000  6.2831853}%
}}%
{\color[named]{Black}{%
\special{pn 8}%
\special{pn 8}%
\special{ar 1800 800 50 50  0.0000000  6.2831853}%
}}%
%
{\color[named]{Black}{%
\special{pn 0}%
\special{sh 1.000}%
\special{ia 3800 800 50 50  0.0000000  6.2831853}%
}}%
{\color[named]{Black}{%
\special{pn 8}%
\special{pn 8}%
\special{ar 3800 800 50 50  0.0000000  6.2831853}%
}}%
%
{\color[named]{Black}{%
\special{pn 0}%
\special{sh 1.000}%
\special{ia 4000 800 50 50  0.0000000  6.2831853}%
}}%
{\color[named]{Black}{%
\special{pn 8}%
\special{pn 8}%
\special{ar 4000 800 50 50  0.0000000  6.2831853}%
}}%
%
{\color[named]{Black}{%
\special{pn 0}%
\special{sh 1.000}%
\special{ia 2700 1900 50 50  0.0000000  6.2831853}%
}}%
{\color[named]{Black}{%
\special{pn 8}%
\special{pn 8}%
\special{ar 2700 1900 50 50  0.0000000  6.2831853}%
}}%
%
{\color[named]{Black}{%
\special{pn 0}%
\special{sh 1.000}%
\special{ia 2900 1900 50 50  0.0000000  6.2831853}%
}}%
{\color[named]{Black}{%
\special{pn 8}%
\special{pn 8}%
\special{ar 2900 1900 50 50  0.0000000  6.2831853}%
}}%
\put(26.5000,-19.1000){\makebox(0,0)[rt]{{\large $\check{\nu}_m$}}}%
\put(30.4000,-19.1000){\makebox(0,0)[lt]{{\large $\hat{\nu}_m$}}}%
\put(6.1000,-28.0000){\makebox(0,0)[lt]{$\lambda$}}%
\put(49.0000,-28.0000){\makebox(0,0)[lt]{$\alpha(\lambda)$}}%
%
{\color[named]{Black}{%
\special{pn 8}%
\special{pa 4700 2850}%
\special{pa 3900 850}%
\special{da 0.070}%
}}%
%
{\color[named]{Black}{%
\special{pn 8}%
\special{pa 3890 860}%
\special{pa 2900 2080}%
\special{da 0.070}%
}}%
%
{\color[named]{Black}{%
\special{pn 8}%
\special{pa 2900 2090}%
\special{pa 2900 2690}%
\special{da 0.070}%
}}%
\put(29.5000,-26.8000){\makebox(0,0)[lt]{{\large $\alpha(\check{\nu}_m)$}}}%
%
{\color[named]{Black}{%
\special{pn 0}%
\special{sh 1.000}%
\special{ia 2900 2680 30 30  0.0000000  6.2831853}%
}}%
{\color[named]{Black}{%
\special{pn 8}%
\special{pn 8}%
\special{ar 2900 2680 30 30  0.0000000  6.2831853}%
}}%
%
{\color[named]{Black}{%
\special{pn 13}%
\special{pa 4930 2810}%
\special{pa 4130 810}%
\special{dt 0.045}%
}}%
%
{\color[named]{Black}{%
\special{pn 13}%
\special{pa 3670 800}%
\special{pa 3150 1450}%
\special{dt 0.045}%
}}%
%
{\color[named]{Black}{%
\special{pn 0}%
\special{sh 1.000}%
\special{ia 3160 1440 30 30  0.0000000  6.2831853}%
}}%
{\color[named]{Black}{%
\special{pn 8}%
\special{pn 8}%
\special{ar 3160 1440 30 30  0.0000000  6.2831853}%
}}%
\put(31.0000,-13.7000){\makebox(0,0)[rt]{{\large $\alpha(\check{\nu}_m)$}}}%
\put(46.4000,-17.1000){\makebox(0,0)[lt]{{\large $\alpha(\delta_0)$}}}%
\put(40.0000,-22.2000){\makebox(0,0)[lt]{{\large $\alpha(\delta_0)$}}}%
\end{picture}%
\caption{
$\alpha(\delta_0)$ is shown as a broken line
in the case $\check{\nu}_m\in C_\alpha$,
and as a dotted line
in the case $\alpha (\check{\nu}_m)\in (\hat{\nu}_m,\check{\nu}_{m+1}]$.
}
\label{rfig}
\end{center}
\end{figure}
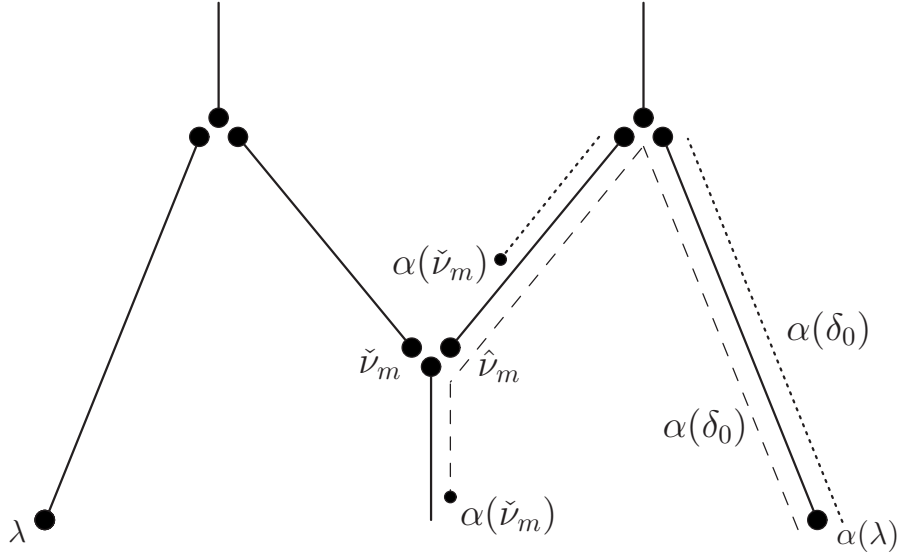

First, we show that $\check{\nu}_m\notin C_\alpha$.
Suppose on the contrary that $\check{\nu}_m\in C_\alpha$.
Note that the length of the path $\alpha (\delta _0)$
joining $\alpha (\lambda)$ to $\alpha (\check{\nu}_m)$ is $l(\gamma)/2$.
So if $\check{\nu}_m$ and $\alpha (\check{\nu}_m)$ are comparable,
the intersection $\gamma\cap\alpha (\delta _0)$
must coincide with $\delta _1$ as a set.
Therefore $\check{\nu}_m >\alpha (\check{\nu}_m)$.
See Fig. \ref{rfig}.
Then $\check{\nu}_m >\alpha ^{k-1}(\check{\nu}_m)$,
and we have
$\gamma _k=\delta _0*
[\check{\nu}_{m},\alpha ^{k-1}(\check{\nu}_m)]*
\alpha ^{k-1}(\delta _1)$.
Since $\gamma _k$ passes through
$\alpha ^{k-1}(\check{\nu}_m)$ and $\alpha ^{k-1}(\hat{\nu}_m)$,
it follows that $\lambda$ and $\alpha ^k(\lambda )$ are incomparable,
which contradicts the choice of $\lambda$.

Next, we show that
$\alpha (\check{\nu}_m)\notin (\hat{\nu}_m,\check{\nu}_{m+1}]$.
Suppose not.
Then the branch locus obtained from the embedded interval
$(\hat{\nu}_m,\alpha (\check{\nu}_m))$
contains $\alpha (\hat{\nu}_m)$, that is,
$\hat{\nu}_m$ and $\alpha (\hat{\nu}_m)$ are comparable.
See Fig. \ref{rfig}.
Since we are assuming 
 that $\check{\nu}_m$ and $\hat{\nu}_{m}$
belong to a common positive branch locus,
we have $\hat{\nu}_m<\alpha (\hat{\nu}_m)$.
Then $\hat{\nu}_m <\alpha ^{k-1}(\hat{\nu}_m)$,
and therefore
$\gamma _k=\delta _0*
[\hat{\nu}_{m},\alpha ^{k-1}(\hat{\nu}_m)]*
\alpha ^{k-1}(\delta _1)$.
Since $\gamma _k$ passes through
$\check{\nu}_m$ and $\hat{\nu}_m$,
it follows that
$\lambda$ and $\alpha ^k(\lambda )$ are incomparable,
which is a contradiction.

Finally,
we consider other cases.
If $\alpha (\check{\nu}_m)\ne\hat{\nu}_m$, 
we have
$l(\alpha ^{j+1}(\gamma)-\alpha^j (\gamma))>l(\gamma)/2$
for all $0\le j<k$.
Therefore, we have
\[
1<l(\gamma _1)<l(\gamma _2)<\cdots <l(\gamma _k)=1
\]
This contradiction shows that
$\alpha (\check{\nu}_m)=\hat{\nu}_m$.
We also have that $\alpha ^k(\check{\nu}_m)=\check{\nu}_m$.
Otherwise, $\gamma _k=\delta _0*\alpha ^k (\delta _0)^{-1}$,
and therefore $\gamma _k$ passes through
$\check{\nu}_m$ and $\alpha^k (\check{\nu}_m)$
which belong the common branch locus.
It follows that $\lambda\notin C_{\alpha ^k}$,
which is a contradiction.
\end{proof}

\begin{lem}
\label{invb}
Let $\alpha\in\pi _1(M)$
and let $B$ be an $\alpha$-invariant branch locus.
If $\{\nu_t\}_{0<t<\epsilon}$ be an embedded interval
such that $B=\lim _{t\to 0}\nu _t$,
then there exists $0<\epsilon'<\epsilon$ such that
$\nu _t$ is in $C_\alpha$ for any $t\in (0,\epsilon ')$.
\end{lem}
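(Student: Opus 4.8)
The plan is to use the $\alpha$-invariance of $B$ to force $\nu_t$ and its image $\alpha(\nu_t)$ into a single coordinate chart of $L$ for all small $t$, where comparability comes for free and hence $\nu_t\in C_\alpha$.

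First I would fix any leaf $\mu\in B$. By definition $B=\lim_{t\to 0}\nu_t$ is the set of all limit leaves of the embedded interval $\{\nu_t\}$ in the (non-Hausdorff) $1$-manifold $L$; concretely this means $\nu_t\to\mu$, i.e.\ for every neighborhood of $\mu$ the leaves $\nu_t$ eventually lie in it. The crucial observation is that invariance propagates this to the image family: since $\alpha$ is a homeomorphism of $L$ with $\alpha(B)=B$, applying $\alpha$ to the defining relation yields $\lim_{t\to 0}\alpha(\nu_t)=\alpha(B)=B$, so $\{\alpha(\nu_t)\}$ also accumulates on every point of $B$, in particular on the very same leaf $\mu$. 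Hence both $\nu_t\to\mu$ and $\alpha(\nu_t)\to\mu$ as $t\to 0$.

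Next I would pick a coordinate neighborhood $U$ of $\mu$ in $L$ that is homeomorphic to an open interval, coming from a foliation chart of $\widetilde{\mathcal{F}}$: in such a chart a transversal meets each local leaf exactly once, so any two leaves in $U$ are joined by a monotone transverse arc and are therefore comparable. By the two convergences above there is some $\epsilon'\in(0,\epsilon)$ with $\nu_t\in U$ and $\alpha(\nu_t)\in U$ for all $t\in(0,\epsilon')$. For each such $t$ the leaves $\nu_t$ and $\alpha(\nu_t)$ then lie in the common chart $U$, hence are comparable, which is exactly the assertion $\nu_t\in C_\alpha$.

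I expect the only delicate points to be the two facts just invoked: that $\lim_{t\to 0}\nu_t=B$ genuinely forces convergence $\nu_t\to\mu$ to each individual leaf of $B$ (this is precisely what non-Hausdorff convergence to a branch locus encodes), and that comparability is a purely local condition realized inside a single foliation chart. Note that the argument is uniform in the sign of the branch locus and does not use leafwise hyperbolicity; everything rests on the invariance $\alpha(B)=B$, which returns the image interval to the same branch locus and thereby squeezes $\nu_t$ and $\alpha(\nu_t)$ toward a common leaf $\mu$.
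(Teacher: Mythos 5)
Your proposal is correct and takes essentially the same route as the paper: the paper likewise applies $\alpha$ to the relation $B=\lim_{t\to 0}\nu_t$ to conclude that $\{\alpha(\nu_t)\}$ is also asymptotic to $B$, and then invokes the local $1$-manifold structure of $L$ to finish. The only cosmetic difference is that the paper phrases the last step as the two intervals coinciding near $B$, whereas you place both $\nu_t$ and $\alpha(\nu_t)$ in a single transversal chart around a fixed $\mu\in B$, where comparability is automatic; both versions rest on the same facts.
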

\begin{proof}
Let $\{\nu_t\}_{0<t<\epsilon}$ be as in the hypothesis of the lemma.
Then, $\alpha (B)=\lim _{t\to 0}\alpha (\nu _t)$.
Since $B=\alpha (B)$, 
two intervals
$\{\nu _t\}_{0<t<\epsilon }$ and
$\{\alpha (\nu _t)\} _{0<t<\epsilon }$
are both asymptotic to $B$
from the same direction as $t$ tends to $0$.
This with the fact that $L$ is a $1$-manifold implies that
the two intervals coincide near $B$.
Thus, the conclusion of the lemma follows.
\end{proof}


\section{branch loci and their stabilizers}

In this section we focus on a branch locus of the leaf space $L$.
We consider the case where a branch locus is a finite set
and clarify the structure of the stabilizer of such a locus.

\begin{lem}
\label{fixb}
Let $B$ be a finite branch locus
and let  $\alpha\in\stab(B)$.
If $\rho _{\univ}(\alpha )$ has a fixed point in $S^1_{\univ}$,
then $\alpha$ fixes $B$ pointwise.
\end{lem}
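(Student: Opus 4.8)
The plan is to translate the hypothesis into a statement about how $\alpha$ permutes the points of $B$, and then to show that this permutation must be trivial. Since $B$ is finite and $\alpha(B)=B$, the element $\alpha$ induces a permutation of $B$, and our goal is precisely to prove that this permutation is the identity. For each $\lambda\in B$ write $C_\lambda\subset S^1_{\univ}$ for the core of the monotone map $\phi_\lambda$. The equivariance in condition~1 of the definition of a universal circle gives $\phi_{\alpha(\lambda)}\circ\rho_{\univ}(\alpha)=\alpha\circ\phi_\lambda$, so $\rho_{\univ}(\alpha)$ carries each fiber $\phi_\lambda^{-1}(y)$ onto $\phi_{\alpha(\lambda)}^{-1}(\alpha(y))$; being a homeomorphism it therefore sends gaps to gaps and cores to cores, that is, $\rho_{\univ}(\alpha)(C_\lambda)=C_{\alpha(\lambda)}$. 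Thus $\rho_{\univ}(\alpha)$ permutes the finite family $\{C_\lambda\}_{\lambda\in B}$ in exactly the same way that $\alpha$ permutes $B$.

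First I would establish that the cores of distinct leaves of $B$ are disjoint and cyclically ordered on $S^1_{\univ}$. Any two distinct points of a branch locus are incomparable, so condition~2 applies: for $\lambda\ne\mu$ in $B$ the core $C_\lambda$ is contained in the closure of a single gap of $\phi_\mu$. Since the core of $\phi_\mu$ is disjoint from every gap of $\phi_\mu$, this already confines $C_\lambda\cap C_\mu$ to the two endpoints of that gap; using the finiteness of $B$ together with the marker and turning-point description of the gaps from \cite{cd03}, I would upgrade this to genuine disjointness, so that the $C_\lambda$ are pairwise disjoint closed arcs inheriting a cyclic order from $S^1_{\univ}$. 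Because $\rho_{\univ}(\alpha)$ is orientation preserving and permutes these arcs preserving their cyclic order, the induced permutation of $B$ is a cyclic rotation; in particular it is either fixed-point-free on $B$ or the identity.

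Next I would locate the assumed fixed point. Let $x\in S^1_{\univ}$ satisfy $\rho_{\univ}(\alpha)(x)=x$. If $x$ lies in one of the complementary open arcs of $\bigcup_{\lambda\in B}C_\lambda$, then, since $\rho_{\univ}(\alpha)$ preserves this union and hence permutes its complementary arcs, the arc containing $x$ must be invariant; as the map is orientation preserving it then fixes the two endpoints of that arc, and these endpoints lie in the cores. Thus in any case we may assume $x\in C_{\lambda_0}$ for some $\lambda_0\in B$. Then $x=\rho_{\univ}(\alpha)(x)\in C_{\alpha(\lambda_0)}$, so $x\in C_{\lambda_0}\cap C_{\alpha(\lambda_0)}$, and by the disjointness established above this forces $\alpha(\lambda_0)=\lambda_0$. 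Hence the rotation has a fixed point in $B$ and is therefore the identity, so $\alpha$ fixes every point of $B$, as required.

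The main obstacle is the structural claim of the second paragraph: that for a finite branch locus the cores $C_\lambda$ are actually pairwise \emph{disjoint}, and not merely disjoint away from their endpoints, so that they are honestly cyclically ordered. Condition~2 by itself only yields disjointness off the gap endpoints, and ruling out a common endpoint of two distinct cores — equivalently, ruling out a fixed point of $\rho_{\univ}(\alpha)$ lying simultaneously in several cores of a nontrivial $\alpha$-orbit in $B$ — is where I expect the real work to lie, and where the finiteness of $B$ and the fine structure of the Calegari--Dunfield construction (markers, leftmost sections, special sections, and turning points) must be invoked. Everything else — the equivariance, the reduction to a rotation, and the extraction of a fixed leaf from a fixed point of the circle map — is formal.
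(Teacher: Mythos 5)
Your overall strategy (equivariance of the cores $C_\lambda$ of the monotone maps $\phi_\lambda$, a cyclic order on $\{C_\lambda\}_{\lambda\in B}$, and the observation that an orientation-preserving circle homeomorphism realizing a cyclic rotation of $B$ with a fixed element must act trivially on $B$) is coherent, but it hinges entirely on the pairwise \emph{disjointness} of the cores, and this is a genuine gap rather than a routine verification. Condition 2 of the definition of a universal circle only places $C_\lambda$ in the \emph{closure} of a single gap of $\phi_\mu$; the deliberate use of ``closure'' means the definition itself permits $C_\lambda$ to contain an endpoint of that gap, i.e.\ a point of $C_\mu$. Nothing in the axioms, nor in the marker/leftmost-section construction as recalled in \S 2, rules this out, and you do not supply an argument. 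Without disjointness both of your key steps fail: the induced permutation of $B$ need not be a well-defined cyclic rotation, and the final implication ``$x\in C_{\lambda_0}\cap C_{\alpha(\lambda_0)}$ forces $\alpha(\lambda_0)=\lambda_0$'' is exactly the statement you could not prove. (A smaller point: cores of monotone maps are closed sets, typically Cantor sets, not arcs; this does not break the cyclic-order argument for unlinked closed sets, but the phrasing ``closed arcs'' is inaccurate.)

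The paper's proof avoids this difficulty by working with the sets $I_x=\phi_\lambda^{-1}(x)$, whose disjointness for $x\ne y$ in a single $S^1_\infty(\lambda)$ is automatic since every element of $S^1_{\univ}$ is a section. Concretely: assuming $\alpha(\lambda)\ne\lambda$, one takes $k$ with $\alpha^k(\lambda)=\lambda$, uses torsion-freeness of $\pi_1(M)$ and leafwise hyperbolicity to get a fixed point $p$ of the hyperbolic isometry $\alpha^k$ on $S^1_\infty(\lambda)$ with $p\ne r$, where $r$ is the turning point from $\alpha(\lambda)$ to $\lambda$; the special section $\sigma_p$ is fixed by $\rho_{\univ}(\alpha^k)$, and the hypothesis that $\rho_{\univ}(\alpha)$ has a fixed point promotes this to $\rho_{\univ}(\alpha)(\sigma_p)\in I_p$, while the turning-point construction forces $\rho_{\univ}(\alpha)(\sigma_p)=\sigma_{\alpha(p)}\in I_r$, a contradiction. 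Note that the hypothesis on $\rho_{\univ}(\alpha)$ is used there in an essential, different way (to identify $\operatorname{Fix}(\rho_{\univ}(\alpha)^k)$ with $\operatorname{Fix}(\rho_{\univ}(\alpha))$), whereas in your argument it only serves to locate a fixed point inside some core. If you want to salvage your approach, you would need to either prove core disjointness for finite branch loci (which will likely require precisely the turning-point analysis the paper uses) or replace the cores by the intervals $I_{r(\mu,\lambda)}$ over turning points.
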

\begin{proof}
Let $\alpha\in\stab(B)$ be a nontrivial element
satisfying the hypothesis of the lemma,
and let $\lambda$ be any point of $B$.
Then, since $B$ is finite,
there exists some $k\in \mathbb{N}$ such that
$\alpha^k(\lambda)=\lambda$.
Notice here that $\alpha^k$ is nontrivial in $\pi_1(M)$,
because by tautness of $\mathcal{F}$ and by Novikov's theorem (\cite{nov})
our manifold $M$ is aspherical
and hence has no torsion in $\pi_1(M)$ (\cite[Corllary 9.9]{torsion}).
Now, let us suppose by contradiction that $\alpha(\lambda )\ne\lambda$.
Let $r\in S^1_\infty (\lambda)$ be the turning point
from $\alpha (\lambda)$ to $\lambda$
and let $p\in S^1_{\infty}(\lambda)$ be
one of the two fixed points of $\alpha^k$
which is different from $r$.
Then the special section $\sigma _p$ in $S^1_{\univ}$
is fixed by $\rho _{\univ}(\alpha ^k)$. 
This with the hypothesis that
$\rho _{\univ}(\alpha)$ has a fixed point implies that
$\sigma _p$ must be fixed by $\rho _{\univ}(\alpha )$ itself.
So we have $\rho _{\univ}(\alpha )(\sigma _p)\in I_p$.
On the other hand, since $\alpha (p)\in\alpha (\lambda )$,
it follows from the definition of turning point that
$\rho _{\univ}(\alpha )(\sigma _p)=\sigma _{\alpha (p)}\in I_r$.
This is a contradiction because $I_p$ and $I_r$ are disjoint.
\end{proof}

\begin{thm}
\label{fix}
Let $M$ be a closed oriented $3$-manifold,
and $\mathcal{F}$ a transversely oriented
leafwise hyperbolic taut foliation of $M$.
Suppose $\mathcal{F}$ has a finite branch locus $B$.
If an element of $\stab(B)$ fixes some point of $B$
then it fixes all the points of $B$.
\end{thm}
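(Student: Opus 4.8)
The plan is to deduce the statement directly from Lemma \ref{fixb}. That lemma asserts that for $\alpha \in \stab(B)$, the mere existence of a fixed point of $\rho_{\univ}(\alpha)$ in $S^1_{\univ}$ already forces $\alpha$ to fix every point of $B$. So, assuming $\alpha \in \stab(B)$ fixes some point $\lambda_0 \in B$, it suffices to produce a single point of $S^1_{\univ}$ that is fixed by $\rho_{\univ}(\alpha)$. The case where $\alpha$ is trivial is immediate, so I would assume $\alpha$ is nontrivial.

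First I would exploit the hypothesis $\alpha(\lambda_0) = \lambda_0$ to invoke the observation made at the beginning of \S3: since covering transformations act freely, the restriction of $\alpha$ to the hyperbolic plane $\lambda_0$ is a nontrivial orientation-preserving isometry, and it is in fact hyperbolic (no fixed point in $\lambda_0$ rules out elliptic, and the compactness of $M$ rules out parabolic). Consequently it has two fixed points on the ideal boundary $S^1_\infty(\lambda_0)$; let $p$ be one of them, so that $\alpha(p) = p$.

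Next I would transport this boundary fixed point up to the universal circle through the special section construction. The point $p \in S^1_\infty(\lambda_0)$ determines the special section $\sigma_p$, which descends to a point of $S^1_{\univ}$. Because the construction of special sections (leftmost sections and turning points along the unique path between leaves) is $\pi_1(M)$-equivariant---the same naturality used in the proof of Lemma \ref{fixb}, where $\rho_{\univ}(\alpha)(\sigma_p) = \sigma_{\alpha(p)}$---and because $\alpha(\lambda_0) = \lambda_0$ together with $\alpha(p) = p$, we obtain $\rho_{\univ}(\alpha)(\sigma_p) = \sigma_{\alpha(p)} = \sigma_p$. Thus $\sigma_p$ is a fixed point of $\rho_{\univ}(\alpha)$ in $S^1_{\univ}$, and Lemma \ref{fixb} then yields that $\alpha$ fixes $B$ pointwise.

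The step deserving the most care is the equivariance identity $\rho_{\univ}(\alpha)(\sigma_p) = \sigma_{\alpha(p)}$. This rests on the fact that the entire chain of choices defining $\sigma_p$ is canonical and hence commutes with the $\pi_1(M)$-action; condition 1 in the definition of a universal circle encodes exactly this compatibility, and the identity is precisely what is already exploited in Lemma \ref{fixb}. Once this is granted, the remainder is a short formal deduction, so I expect no further obstacle beyond making sure the nontrivial hyperbolic behavior of $\alpha$ on $\lambda_0$ is correctly cited from \S3.
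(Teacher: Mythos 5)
Your proposal is correct and follows essentially the same route as the paper: pick a boundary fixed point $p \in S^1_\infty(\lambda_0)$ of the hyperbolic isometry induced by $\alpha$, observe that the special section $\sigma_p$ is fixed by $\rho_{\univ}(\alpha)$ via equivariance, and conclude by Lemma \ref{fixb}. The paper's own proof is just a terser version of this same argument.
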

\begin{proof}
Let $\lambda$ be the $\alpha$-fixed point in $B$,
and let $p,q\in S^1_\infty (\lambda )$ be the fixed points of $\alpha$.
Then $\sigma _p,\sigma _q\in S^1_{\univ}$ are fixed by $\rho _{\univ}(\alpha)$.
The proof follows by Lemma \ref{fixb}.
\end{proof}

The next result gives information on topological types of leaves
in a finite branch locus.

\begin{thm}
\label{pc}
Let $M$ and $\mathcal{F}$ be as in Theorem \ref{fix}
and let $B$ be a finite branch locus of $L$.
Then, for any $\lambda\in B$,
\begin{enumerate}
\item[1.]
if $\stab(B)$ is trivial,
$\underline{\lambda}$ is diffeomorphic to a plane, and
\item[2.]
if $\stab(B)$ is nontrivial,
$\underline{\lambda}$ is diffeomorphic to a cylinder.
\end{enumerate}
\end{thm}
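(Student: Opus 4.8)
The plan is to realize $\underline{\lambda}$ as a quotient of the hyperbolic plane and read off its topology from the deck group. Since $\mathcal F$ is leafwise hyperbolic, the leaf $\lambda$ of $\widetilde{\mathcal F}$ is isometric to $\mathbb H^2$, hence simply connected, and the covering projection restricts to a covering $\pi|_\lambda:\lambda\to\underline{\lambda}$; as $\lambda$ is simply connected this is the universal covering, with deck group $G:=\stab(\lambda)=\{\alpha\in\pi_1(M)\mid\alpha(\lambda)=\lambda\}$, so that $\pi_1(\underline{\lambda})\cong G$. Note that since $M$ is oriented and $\mathcal F$ transversely oriented, $G$ acts by orientation-preserving isometries on $\lambda$ and $\pi_1(M)$ preserves the orientation of $L$. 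The first thing I would establish is $G\subseteq\stab(B)$: because $L$ is a $1$-manifold, the germ at $\lambda$ of the interval $\{\nu_t\}$ approaching from the positive side is unique, so there is exactly one positive (resp. negative) branch locus through $\lambda$; an orientation-preserving homeomorphism $\alpha\in G$ fixing $\lambda$ must carry $B$ to this same locus, i.e. $\alpha(B)=B$. Combined with Theorem \ref{fix}, every $\alpha\in G$ fixes $\lambda\in B$ and hence fixes $B$ pointwise, while conversely the pointwise stabilizer of $B$ lies in $G$; thus $G$ is precisely the kernel of the action of $\stab(B)$ on the finite set $B$, and so has finite index in $\stab(B)$.

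Case 1 is then immediate: if $\stab(B)$ is trivial then $G$ is trivial, $\pi|_\lambda$ is a homeomorphism, and $\underline{\lambda}\cong\lambda\cong\mathbb H^2$ is diffeomorphic to a plane.

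For Case 2 I would first note that $G$ is nontrivial: taking $\alpha\neq1$ in $\stab(B)$, finiteness of $B$ gives a power $\alpha^k$ fixing $\lambda$, and $\alpha^k\neq1$ since $\pi_1(M)$ is torsion-free (as used in Lemma \ref{fixb}), so $\alpha^k\in G\setminus\{1\}$. Recall from the discussion before Lemma \ref{key} that every nontrivial element of $G$ acts on $\lambda\cong\mathbb H^2$ as a \emph{hyperbolic} isometry. The crux is to produce a common fixed point at infinity. Choosing $\mu\in B$ with $\mu\neq\lambda$ (possible since $|B|\ge2$), consider the turning point $r=r(\mu,\lambda)\in S^1_\infty(\lambda)$. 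As every $\alpha\in G$ fixes both $\lambda$ and $\mu$ and the Calegari--Dunfield construction is $\pi_1(M)$-equivariant, $\alpha\cdot r=r(\alpha\mu,\alpha\lambda)=r$, so $G$ fixes the single boundary point $r$. Modelling $\lambda$ as the upper half-plane with $r=\infty$, the orientation-preserving isometries fixing $\infty$ are the maps $z\mapsto az+b$ with $a>0$, and $\phi:(z\mapsto az+b)\mapsto a$ is a homomorphism $G\to\mathbb R_{>0}$ whose kernel consists of parabolics; since $G$ has none, $\phi$ is injective. Finally, the translation length of a hyperbolic $g\in G$ equals $|\log\phi(g)|$ and is realized by a closed geodesic of $\underline{\lambda}$, i.e. a non-contractible loop of $M$, so by compactness of $M$ these lengths are bounded below by some $\delta>0$. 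Hence $\log\phi(G)\subset\mathbb R$ is a nontrivial subgroup meeting $(-\delta,\delta)$ only at $0$, so it is infinite cyclic; therefore $G\cong\mathbb Z$, generated by a single hyperbolic isometry, and $\underline{\lambda}\cong\mathbb H^2/\mathbb Z$ is diffeomorphic to a cylinder.

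The main obstacle is exactly the identification $G\cong\mathbb Z$ in Case 2: a priori $G$ could be any discrete, torsion-free, purely hyperbolic Fuchsian group, and the systole bound alone does not force it to be cyclic (Schottky groups have positive systole). What rescues the argument is the common fixed point $r$ at infinity furnished by the turning point, which collapses $G$ into the solvable $ax+b$ group, where the absence of parabolics — itself a consequence of the compactness of $M$ — turns the length cocycle into an injection $G\hookrightarrow\mathbb R$. I would therefore devote the most care to two points: verifying rigorously that $r(\mu,\lambda)$ is fixed by the leaf-stabilizer $G$ (equivariance of the construction together with Theorem \ref{fix}), and justifying the uniqueness of the branch locus through $\lambda$ that underlies $G\subseteq\stab(B)$.
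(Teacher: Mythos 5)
Your proposal is correct, and while Case 1 and the preliminary reduction (identifying $\pi_1(\underline{\lambda})$ with the leaf stabilizer $G=\{\alpha\mid\alpha(\lambda)=\lambda\}$ and checking $G\subseteq\stab(B)$ via uniqueness of the one-sided branch locus through $\lambda$ — a point the paper leaves implicit and you rightly flag) match the paper, your Case 2 takes a genuinely different route. The paper argues by contradiction from the topology of the leaf: if $\underline{\lambda}$ is neither a plane nor a cylinder, $\pi_1$-injectivity yields a rank-two free subgroup $\langle\alpha,\beta\rangle\leq G$ of hyperbolic isometries with no common fixed point on $S^1_\infty(\lambda)$, so one of them, say $\alpha$, moves the turning point $r=r(\mu,\lambda)$; equivariance of turning points then makes $\{\alpha^k(\mu)\}_{k\in\mathbb{Z}}$ an infinite subset of $B$, contradicting finiteness. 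You instead run the equivariance the other way: using Theorem \ref{fix} (legitimately, as it precedes this theorem and its proof is independent), every element of $G$ fixes $\mu$ and hence fixes $r$, so $G$ sits inside the affine stabilizer of a boundary point; absence of parabolics (the same compactness/systole argument the paper uses in \S 3) makes the dilation homomorphism injective, and the lower bound on translation lengths forces the image to be a discrete, hence infinite cyclic, subgroup of $\mathbb{R}$. The paper's version stays entirely within the turning-point/finiteness mechanism and only \emph{excludes} the bad case; yours positively identifies $G\cong\mathbb{Z}$ as a cyclic group of translations along a common axis endpoint, at the modest cost of importing the classification of discrete purely hyperbolic subgroups of the $ax+b$ group. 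Two small points to make airtight: simple connectivity of $\lambda$ (hence $\pi_1(\underline{\lambda})\cong G$) rests on Novikov's $\pi_1$-injectivity of leaves, not on leafwise hyperbolicity alone, though the paper assumes this throughout; and the non-contractibility in $M$ of the closed geodesic realizing the translation length again uses that same injectivity.
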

\begin{proof}
Let $\lambda\in B$.
Since $\mathcal{F}$ is taut,
by Novikov's theorem (\cite{nov})
the inclusion map of each leaf of $\mathcal{F}$
into $M$ is $\pi_1$-injective.
So,
if $\underline{\lambda}$ is not a plane,
there exists a nontrivial element $\alpha\in\pi_1(M)$
such that $\alpha(\lambda)=\lambda$.
This $\alpha$ must belong to $\stab (B)$, 
showing the first statement of the theorem.

To prove the second statement, suppose that $\stab (B)$ is nontrivial.
Then, we can first observe that
 $\underline{\lambda}$ is not a plane.
In fact, let $\gamma$ be any nontrivial element of  $\stab (B)$.
Since $B$ is finite,
$\gamma^n(\lambda)=\lambda$ for some $n\in \mathbb{N}$.
By the same argument as in the proof of Lemma \ref{fixb}
we see that $\gamma^n$ nontrivial in $\pi_1(M)$.
This shows the observation.
Now, by way of contradiction, let us assume $\underline{\lambda}$ is not a cylinder, either.
Then, again by $\pi_1$-injectivity of the inclusion $\lambda\to M$,
we can find elements $\alpha,\beta\in\pi _1(M)$ generating a free subgroup of rank $2$
such that
$\alpha(\lambda)=\beta(\lambda)=\lambda$.
These two elements are hyperbolic as isometries of
$\lambda$
and having no common fixed point on $S^1_\infty (\lambda)$.
Let $\mu$ be another leaf in $B$,
and let $r\in S^1_\infty (\lambda)$ be
the turning point 
from $\mu$ to $\lambda$.
By exchanging $\alpha$ and $\beta$ if necessary,
we may assume $\alpha(r)\ne r$.
Then,
$\alpha ^k (r)\ne\alpha ^l (r)$ for any $k\ne l\in\mathbb{Z}$.
Pick a point $s\in S^1_\infty (\mu)$
and consider the special section $\sigma _s$ at $s$.
Then,
$\rho _{\univ}(\alpha ^k)(\sigma _s)=\sigma _{\alpha ^k(s)}$
is the special section at $\alpha ^k(s)$.
Since
$\sigma _{\alpha ^k(s)}(\lambda)
=\phi _\lambda\circ\rho _{\univ}(\alpha ^k)(\sigma _s)
=\alpha ^k\circ\phi _\lambda (\sigma _s)=\alpha ^k(r)$,
it follows that 
$\alpha ^k(r)$ is the turning point from
$\alpha ^k(\mu)$ to $\lambda$.
In particular, $\alpha ^k(\mu)\ne\alpha ^l(\mu)$ for $k\ne l$,
hence,
$B$ contains infinitely many elements $\alpha ^k(\mu)$,
$k\in\mathbb{Z}$,
contradicting the finiteness of $B$.
\end{proof}

\begin{rem}
The author does not know
whether or not
there exists a finite branch locus which has a trivial stabilizer.
\end{rem}

\begin{prop}
\label{single}
Let $B=\{\lambda _1,\cdots ,\lambda _n\}$ be a finite branch locus
which has a nontrivial stabilizer
and let $r_i^j\in S^1_\infty (\lambda_i)$ be the turning point
from $\lambda _j$ to $\lambda _i$.
Then there exists $1\le k\le  n$ such that
the set of turning points $\{ r_k^j\mid j\ne k\}$
is a single point in $S^1_\infty (\lambda _k)$.
\end{prop}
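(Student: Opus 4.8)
The plan is to transport the turning points into the universal circle $S^1_{\univ}$ and there reduce the claim to a combinatorial statement about the mutual position of the cores of the maps $\phi_{\lambda_i}$. Recall that each element of $S^1_{\univ}$ is a section $\sigma:L\to E_\infty$ and that $\phi_\lambda(\sigma)=\sigma(\lambda)$. The first thing I would record is the identity relating turning points to these maps. For $s\in S^1_\infty(\lambda_j)$ the path from $\lambda_j$ to $\lambda_i$ has length two, namely $[\lambda_j,\lambda_j]\sqcup[\lambda_i,\lambda_i]$ with break points $\lambda_j,\lambda_i$ in the common branch locus $B$; hence the definition of the special section gives $\sigma_s(\lambda_i)=r(\lambda_j,\lambda_i)=r_i^j$, independently of $s$. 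In other words $\phi_{\lambda_i}$ collapses every special section based at a point of $\lambda_j$ to the single point $r_i^j$. Writing $C_i$ for the core of $\phi_{\lambda_i}$ and using that any two leaves of one branch locus are incomparable, condition $2$ of the definition of a universal circle shows that $C_j$ lies in the closure of a single gap $G_{ij}$ of $\phi_{\lambda_i}$; since the special sections based at $\lambda_j$ belong to $C_j\subset\overline{G_{ij}}$, the constant value of $\phi_{\lambda_i}$ on $G_{ij}$ is exactly $r_i^j$. Consequently $r_i^j=r_i^{j'}$ if and only if $C_j$ and $C_{j'}$ lie in one gap of $\phi_{\lambda_i}$, and the proposition is equivalent to finding an index $k$ for which all the cores $C_j$ with $j\neq k$ lie in a single gap of $\phi_{\lambda_k}$.

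To analyse the mutual position of the cores I would, for each $i$, let $A_i$ be the smallest closed arc of $S^1_{\univ}$ containing $C_i$. From $C_j\subset\overline{G_{ij}}$ together with the symmetric inclusion $C_i\subset\overline{G_{ji}}$, and from the fact that $C_i$ meets no gap of $\phi_{\lambda_i}$ while $C_j$ meets no gap of $\phi_{\lambda_j}$, a short case analysis excludes the possibility that $A_i$ and $A_j$ are linked. Thus for every pair the arcs $A_i,A_j$ are either disjoint or nested, and I can define a relation $C_i\prec C_j$ to mean that $A_i$ is contained in one of the gaps of $\phi_{\lambda_j}$ interior to $A_j$. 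Checking that $\prec$ is transitive and antisymmetric makes it a partial order on the finite set $\{C_1,\dots,C_n\}$.

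Finally I would pick a core $C_k$ that is \emph{minimal} for $\prec$. Minimality says that no core is nested strictly inside $A_k$, so $A_k$ contains no other core; by the dichotomy above every remaining core $C_j$ either contains $A_k$ or has hull disjoint from $A_k$, and in either case $C_j$ is contained in the complementary arc $U_k=S^1_{\univ}\setminus A_k$. As $U_k$ is connected and disjoint from $C_k$, it is contained in one gap of $\phi_{\lambda_k}$; hence all the cores $C_j$ with $j\neq k$ lie in this single gap, and all the turning points $r_k^j$ coincide. This is the desired conclusion, and the only property of $B$ it really uses is finiteness, which guarantees the existence of a $\prec$-minimal core.

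The step I expect to be the main obstacle is the geometric bookkeeping of the middle paragraph: proving that two cores are necessarily disjoint or nested and never linked, and that the outer region $U_k$ is a single gap rather than a union of gaps. Both rest on applying condition $2$ in both directions simultaneously and on the injectivity of $\phi_{\lambda_i}$ on its core, and the degenerate configurations, in which a core reaches an endpoint of its own hull or shares endpoints with a neighbouring hull, will need to be treated separately. I would also verify at the outset that distinct leaves of a single branch locus are always incomparable, so that condition $2$ applies to every pair $\lambda_i,\lambda_j$.
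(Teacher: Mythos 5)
Your argument is correct in its essentials, but it is a genuinely different proof from the one in the paper. The paper never touches the cores of the $\phi_{\lambda_i}$: it uses the nontrivial stabilizer to produce (via Theorems \ref{pc} and \ref{fix}) an element $\gamma$ fixing every leaf of $B$ and acting on each $S^1_\infty(\lambda_i)$ as a hyperbolic isometry with fixed points $p_i,q_i$; it then observes that every turning point $r_i^j$ must equal $p_i$ or $q_i$ (otherwise its $\gamma$-orbit would force $B$ to be infinite, as in the proof of Theorem \ref{pc}), and runs an iterative renumbering argument -- if $\{r_1^j\}$ takes both values, pass to a leaf $\lambda_{n_1}$ on the $q_1$-side and show by a circular-order computation with the intervals $I_x$ that the turning points seen from $\lambda_{n_1}$ strictly decrease in number, terminating by finiteness. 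Your route instead reduces everything to condition 2 of the definition of a universal circle and a nesting argument on hulls of cores, and -- as you note -- never uses the stabilizer hypothesis at all, so it proves a strictly more general statement (finiteness of $B$ suffices). What the paper's approach buys in exchange is the identification of the turning points with the fixed points $p_i,q_i$ of $\gamma$, information that is reused immediately in the proof of Theorem \ref{z}.

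One step of yours deserves repair rather than just the "degenerate configurations" caveat: the special sections $\sigma_s$, $s\in S^1_\infty(\lambda_j)$, need not lie in the core $C_j$, since $\sigma_s$ sits somewhere in $\phi_{\lambda_j}^{-1}(s)$ and may fall in the interior of that preimage, i.e.\ in a gap of $\phi_{\lambda_j}$. The conclusion you want is still true and follows from a degree argument: the set $\Sigma_j=\{\sigma_s\}$ is contained in the closed arc $\phi_{\lambda_i}^{-1}(r_i^j)$ and satisfies $\phi_{\lambda_j}(\Sigma_j)=S^1_\infty(\lambda_j)$, so the proper closed arc $\phi_{\lambda_i}^{-1}(r_i^j)$ already surjects onto $S^1_\infty(\lambda_j)$ under the monotone degree-one map $\phi_{\lambda_j}$; hence $\phi_{\lambda_j}$ is constant on the complementary arc, which therefore lies in a single gap of $\phi_{\lambda_j}$, giving $C_j\subset\phi_{\lambda_i}^{-1}(r_i^j)$. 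Since distinct point-preimages of a monotone map are disjoint, this pins $r_i^j$ down as the unique point whose $\phi_{\lambda_i}$-preimage contains $C_j$, which is exactly the dictionary your nesting argument needs. With that substitution, and the check that leaves of one branch locus are pairwise incomparable, your proof goes through.
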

\begin{proof}
By Theorem \ref{pc},
each $\underline{\lambda _i}$ is a cylindrical leaf.
Let $\gamma$ be an element in $\stab(B)$
such that $\gamma(\lambda_1)=\lambda_1$
and that a loop in $M$ representing $\gamma$ is freely homotopic to
a loop in $\underline{\lambda_1}$ generating $\pi_1(\underline{\lambda_1})$.
By Theorem \ref{fix},
$\gamma$ fixes all points in $B$.
Let $p_i,q_i\in S^1_\infty (\lambda _i)$ be the fixed points of $\gamma$.
Note that $r_i^j\in\{ p_i,q_i\}$ for any $i,j$.
Otherwise,
$B$ cannot be finite
by the same argument as in the proof of Theorem \ref{pc}.
Now, we suppose that $\{r_1^j\mid j\ne 1\}=\{p_1,q_1\}$.
After renumbering the indices if necessary,
we can assume that
$r^j_1=p_1$ for $2\le j<n_1$ and
$r^j_1=q_1$ for $n_1\le j\le n$,
where $3\le n_1\le n$.
Then,
we claim that $r^j_{n_1}=r^1_{n_1}$ for $1\le j<n_1$.
In fact,
let $2\le j<n_1$, and
take $4$ points $x,y,z,w$ as follows:
$x,y$ are in $S^1_\infty(\lambda _1)-\{p_1,q_1\}$
such that the $4$-tuple $(p_1,x,q_1,y)$ 
is circularly ordered,
$z\in S^1_\infty(\lambda _j)$
and $w\in S^1_\infty(\lambda _{n_1})-\{r_{n_1}^1\}$.
Then,
$\sigma _z\in I_{p_1}$, 
$\sigma _w\in I_{q_1}$
and the $4$-tuple
$(I_{p_1},\sigma _x,I_{q_1},\sigma _y)$
is circularly ordered in $S^1_{\univ}$.
Furthermore,
$\sigma _x,\sigma _y\in I_{r_{n_1}^1}$ and
$\sigma _w\notin I_{r_{n_1}^1}$.
It follows that $\sigma _z\in I_{r_{n_1}^1}$,
that is,
$r_{n_1}^1$ is the turning point from $\lambda _j$ to $\lambda _{n_1}$.
This proves the claim.

Now,
if $\{r_{n_1}^j\mid j\ne n_1\}=\{r_{n_1}^1 \}$
we can put $k=n_1$.
Otherwise, by renumbering the indices again,
we can assume that
$r_{n_1}^j=r_{n_1}^1=p_{n_1}$ for $1 \le j<n_2$ $(j\ne n_1)$, and
$r_{n_1}^j=q_{n_1}$ for $n_2\le j\le n$,
where $n_1<n_2\le n$.
Similarly, we have $r_{n_2}^j=r_{n_2}^1$ for $1\le j<n_2$.
Since $B$ is finite,
we can find a desired $k$ 
after repeating this process finitely many times.
\end{proof}

\begin{thm}
\label{z}
Let $M$ and $\mathcal{F}$ be as in Theorem \ref{fix}.
Let $B$ be a finite branch locus of $L$
with nontrivial stabilizer.
Then the stabilizer $\stab (B)$ is isomorphic to $\mathbb{Z}$.
\end{thm}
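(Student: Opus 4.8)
The plan is to analyze $\stab(B)$ through its permutation action on the finite set $B=\{\lambda_1,\dots,\lambda_n\}$. This action gives a homomorphism $\rho:\stab(B)\to\operatorname{Sym}(B)$ whose image $Q$ is finite and whose kernel $K$ is the subgroup of elements fixing $B$ pointwise. I thus obtain a short exact sequence $1\to K\to\stab(B)\to Q\to 1$ with $Q$ finite, and the theorem will follow once I show that $K$ is infinite cyclic and that this extension forces $\stab(B)$ itself to be infinite cyclic.

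First I would identify $K$. By Theorem \ref{fix}, an element of $\stab(B)$ fixing one point of $B$ fixes all of them, so $K$ coincides with the set of elements of $\stab(B)$ fixing a chosen leaf $\lambda\in B$; in particular $K$ is contained in the leaf stabilizer $\{\alpha\in\pi_1(M)\mid\alpha(\lambda)=\lambda\}$. Since the inclusion $\underline{\lambda}\hookrightarrow M$ is $\pi_1$-injective by Novikov's theorem (\cite{nov}), this leaf stabilizer is isomorphic to $\pi_1(\underline{\lambda})$, and because $\stab(B)$ is nontrivial Theorem \ref{pc} shows $\underline{\lambda}$ is a cylinder, so $\pi_1(\underline{\lambda})\cong\mathbb{Z}$. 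Hence $K$ embeds in $\mathbb{Z}$. To see $K\neq 1$, I take any nontrivial $\gamma\in\stab(B)$; since $Q$ is finite some power $\gamma^m$ lies in $K$, and $\gamma^m$ is nontrivial because $\pi_1(M)$ is torsion-free (the argument of Lemma \ref{fixb}, via asphericity of $M$). Therefore $K\cong\mathbb{Z}$.

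It remains to upgrade this to $\stab(B)\cong\mathbb{Z}$. As $[\stab(B):K]=|Q|<\infty$, the group $\stab(B)$ is virtually infinite cyclic, and it is torsion-free as a subgroup of $\pi_1(M)$. Any torsion-free virtually-$\mathbb{Z}$ group is infinite cyclic, which I would verify as follows. Conjugation on $K=\langle g\rangle$ defines $\epsilon:\stab(B)\to\operatorname{Aut}(\mathbb{Z})=\{\pm1\}$; on $H:=\ker\epsilon$ the finite-index subgroup $K$ is central, so $H/Z(H)$ is finite, whence $[H,H]$ is finite by Schur's theorem and therefore trivial since $H$ is torsion-free. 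Thus $H$ is a finitely generated torsion-free abelian group containing $\mathbb{Z}$ with finite index, i.e.\ $H\cong\mathbb{Z}$. Finally, if $\epsilon$ were nontrivial, an element $\alpha$ with $\epsilon(\alpha)=-1$ would invert a generator of $H\cong\mathbb{Z}$, and a direct computation with $\alpha^2\in H$ then yields $\alpha^2=1$, contradicting torsion-freeness. Hence $\epsilon$ is trivial and $\stab(B)=H\cong\mathbb{Z}$.

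The main point requiring care — and the reason the conclusion is not immediate — is that $\stab(B)$ need \emph{not} fix any single leaf of $B$: a generator may permute the leaves cyclically, so one cannot simply identify $\stab(B)$ with a leaf stabilizer. The finite permutation quotient $Q$ must genuinely be carried along, and torsion-freeness of $\pi_1(M)$ is precisely what excludes the infinite-dihedral type extensions that virtually-$\mathbb{Z}$ groups otherwise permit. (Proposition \ref{single} is not needed for this line of argument, though it offers an alternative route via a distinguished turning point.)
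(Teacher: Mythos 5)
Your proof is correct, but it takes a genuinely different route from the paper's. You reduce everything to the extension $1\to K\to\stab(B)\to Q\to 1$, identify the pointwise stabilizer $K$ with a leaf stabilizer via Theorem \ref{fix}, get $K\cong\mathbb{Z}$ from the cylindrical leaf of Theorem \ref{pc} together with torsion-freeness of $\pi_1(M)$, and then invoke (and correctly verify) the purely group-theoretic fact that a torsion-free virtually-$\mathbb{Z}$ group is infinite cyclic. The paper instead works inside the universal circle: it uses Proposition \ref{single} to attach to each leaf in the orbit of $\lambda_1$ a distinguished turning point, observes that the corresponding special sections form a finite $\rho_{\univ}(\stab(B))$-invariant circularly ordered subset of $S^1_{\univ}$, and from this extracts explicit generators $\gamma$ (fixing $\lambda_1$) and $\beta$ (cyclically permuting the orbit) satisfying $\beta\gamma\beta^{-1}=\gamma^k$ and $\beta^m=\gamma^l$; torsion-freeness then forces $k=1$ and the commuting pair with the relation $\gamma^l=\beta^m$ generates a copy of $\mathbb{Z}$ containing $\stab(B)$. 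Your argument is shorter and bypasses Proposition \ref{single} and the circular-order bookkeeping entirely, at the cost of importing the standard Schur-theorem argument; the paper's argument is more hands-on and produces an explicit root $\delta$ with $\gamma=\delta^i$, $\beta=\delta^j$, and keeps Proposition \ref{single} in play (which is of independent interest elsewhere in \S 5). Both rest on the same essential inputs, namely Theorem \ref{fix}, Theorem \ref{pc}, and the absence of torsion in $\pi_1(M)$.
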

\begin{proof}
Let $B=\{\lambda _1,\cdots ,\lambda _n\}$,
and let $\gamma$ be as in the proof of the above proposition.
If $\stab (B)$ acts on $B$ trivially,
then for any $\alpha\in\stab (B)$
there exists an integer $k$ such that $\alpha=\gamma ^k$.
Thus $\gamma$ is a generator of $\stab (B)$.
So we assume that $\stab (B)$ acts on $B$ nontrivially.
Let $p_i,q_i\in S^1_\infty(\lambda _i)$ be the fixed points of $\gamma$,
and let $r_i^j\in S^1_\infty(\lambda _i)$ be the turning point
from $\lambda _j$ to $\lambda _i$ for $i\ne j$.
By Proposition \ref{single},
without loss of generality we can assume that
$\{ r^j_1\mid j\ne 1\}$ is a single point.
Put
$\stab (B)(\lambda _1)=\{\alpha (\lambda _1)\mid\alpha \in\stab(B)\}
=\{\lambda _1 ,\cdots ,\lambda _m\}$
where $1<m\le n$.
Since the natural action preserves the set of turning points,
$\{r_i^j\mid 1\le j\le n,j\ne i\}$ is also a single point for any $i\le m$.
Let us denote this single point by $p_i$.
It follows that the subset
$\{\sigma_{p_i}\mid 1\le i\le m\}$ of $S^1_{\univ}$ is kept invariant
by homeomorphisms $\rho _{\univ}(\alpha )$
for $\alpha \in \stab(B)$.
After renumbering indices if necessary, we can assume that
the $m$-tuple
$(\sigma _{p_1},\cdots ,\sigma _{p_m})$ is circularly ordered in $S^1_{\univ}$.
Let $\beta \in\stab(B)$ be such that
$\rho_{\univ}(\beta )(\sigma _{p_1})=\sigma_ {p_2}$, that is,
$\beta (\lambda _1)=\lambda _2$.
Then we have $\beta (\lambda _i)=\lambda _{i+1}$
where the indices $i$ are taken modulo $m$.
Since $\beta\gamma\beta ^{-1}(\lambda_1)=\lambda_1$,
we have $\beta\gamma\beta ^{-1}=\gamma ^k$ for some $k\ne 0$.
Moreover,
there is $l\ne 0$ such that $\beta ^m=\gamma ^l$.
It follows that
$\beta ^{km}=\gamma ^{kl}=\beta\gamma ^l\beta ^{-1}=\beta ^m$,
that is, $\beta ^{(k-1)m}$ is trivial.
If $k\ne 1$,
$\beta$ is a torsion element in $\pi _1 (M)$,
which is a contradiction.
Therefore $k=1$ and we have that
$\gamma$ and $\beta$ commute.
Since $\pi _1 (M)$ is torsion-free,
the subgroup 
$<\gamma,\beta\mid\gamma ^l\beta ^{-m}>$
must be isomorphic to $\mathbb{Z}$.
It follows that there is $\delta\in\pi _1(M)$ such that
$\gamma=\delta ^i$ and $\beta=\delta ^j$ where $i\ne 0$ and $j\ne 0$.
Let $\alpha$ be any element in $\stab(B)$.
Then $\alpha (\lambda_1)=\lambda_i$ for some $1\le i\le m$.
By the choice of $\gamma$ and $\beta$,
we have that
$\alpha$ can be represented as a word in $\gamma,\beta$,
hence, in $\delta$.
It follows that $\stab(B)$ is isomorphic to $\mathbb{Z}$.
\end{proof}

We say that $\alpha\in\pi_1 (M)$ is {\it infinitely divisible}
if for any integer $\ell$,
there are $k>\ell$ and $\beta\in\pi_1 (M)$
such that $\alpha =\beta ^k$. 

\begin{thm}
\label{div}
Let $M$ and $\mathcal{F}$ be as in Theorem \ref{fix},
and let $B$ be a finite branch locus of $L$
such that $\stab (B)$ acts on $B$ nontrivially.
Then 
a generator of $\stab(B)$ $(\cong\mathbb{Z})$ is indivisible.
\end{thm}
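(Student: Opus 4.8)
The plan is to argue by contradiction: suppose a generator $\delta$ of $\stab(B)\cong\mathbb{Z}$ (which exists by Theorem \ref{z}) is divisible, say $\delta=\zeta^{k}$ with $k\ge 2$, and derive a contradiction. Two preliminary observations are immediate. First, $\zeta$ commutes with its own power $\zeta^{k}=\delta$, so $\zeta$ normalizes $\stab(B)=\langle\delta\rangle$; consequently $\zeta(B)$ is again a finite branch locus with $\stab(\zeta(B))=\zeta\langle\delta\rangle\zeta^{-1}=\langle\delta\rangle$, on which $\delta$ again acts nontrivially. Second, since $\stab(B)$ acts on $B$ nontrivially I may choose $\lambda\in B$ with $\delta(\lambda)\ne\lambda$; as two distinct points of one branch locus are mutually incomparable, this gives $\lambda\notin C_{\delta}$.

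Next I would run $\lambda$ through the comparable-set machinery of Section 4. From the basic inclusion $C_{\zeta}\subset C_{\zeta^{k}}=C_{\delta}$ and $\lambda\notin C_{\delta}$ I get $\lambda\notin C_{\zeta}$. On the other hand, because $B$ is finite some power $\delta^{d}$ fixes $\lambda$, so $\lambda\in C_{\delta^{d}}=C_{\zeta^{kd}}$ and in particular $C_{\zeta^{kd}}\ne\emptyset$; by Proposition \ref{cempty} the path from $\lambda$ to $\zeta(\lambda)$ therefore cannot have odd length, so it has even length. Applying Lemma \ref{return} to $\zeta$ (with the power $kd>1$) now shows that the two break points $\check\nu$ and $\hat\nu$ at the midpoint of this path lie in a common branch locus $B^{*}$ and satisfy $\zeta(\check\nu)=\hat\nu$ and $\zeta^{kd}(\check\nu)=\check\nu$. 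Since $\check\nu,\hat\nu\in B^{*}$ and branch loci are either equal or disjoint, $\zeta(B^{*})=B^{*}$; thus $\zeta\in\stab(B^{*})$, and because $\check\nu\ne\hat\nu$ this action is nontrivial.

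The remaining and hardest step is to extract a contradiction from ``$\zeta$ stabilizes the branch locus $B^{*}$ nontrivially while $\zeta^{kd}$ fixes one of its points.'' One route is to apply Lemma \ref{fixb} to $\zeta\in\stab(B^{*})$: nontriviality of the action forces $\rho_{\univ}(\zeta)$ to have no fixed point on $S^1_{\univ}$, whereas $\rho_{\univ}(\zeta)^{kd}=\rho_{\univ}(\delta^{d})$ does fix the special sections attached to the two ideal fixed points of the hyperbolic isometry $\delta^{d}$ on the leaves of $B$; a rotation-number analysis of $\rho_{\univ}(\zeta)$, using that the relevant turning points lie among those two ideal fixed points, should then be incompatible with the cyclic permutation of the loci $\zeta^{i}(B)$. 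The alternative, and I think cleaner, route is a descent: provided $B^{*}$ is again finite, Theorem \ref{z} gives $\stab(B^{*})=\langle\delta_{1}\rangle$ with $\zeta\in\langle\delta_{1}\rangle$, and tracking exponents expresses $\delta$ as an ever higher power, so that one-step divisibility upgrades to infinite divisibility of $\delta$ --- impossible in the (torsion-free) fundamental group of a closed aspherical $3$-manifold. I expect the genuine obstacles to be exactly (a) showing that the midpoint locus $B^{*}$ is finite, and (b) forcing the divisibility exponent to grow without bound; it is here that the finiteness of $B$ and the leafwise hyperbolic geometry entering through Theorem \ref{z} and Lemma \ref{fixb} must do the real work.
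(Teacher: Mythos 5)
Your overall skeleton matches the paper's---contradiction via $\delta=\zeta^{k}$, the comparable-set machinery of Section 4, Proposition \ref{cempty}, Lemma \ref{return}, and Lemma \ref{fixb}---but there is a genuine gap exactly where you flag one, and it traces back to your choice of base point for Lemma \ref{return}. You apply that lemma to a point $\lambda\in B$ moved by $\delta$; since $\lambda\notin C_{\delta}$, the smallest power of $\zeta$ whose comparable set contains $\lambda$ is $kd$ (where $\delta^{d}$ fixes $\lambda$), so the lemma only produces a leaf fixed by $\zeta^{kd}=\delta^{d}$, hence only a fixed point of $\rho_{\univ}(\delta^{d})$ in $S^1_{\univ}$. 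That is not enough: $\delta^{d}$ fixes $B$ pointwise anyway, and a circle homeomorphism whose $d$-th power has a fixed point need not have one itself. Consequently neither of your closing routes works. Route 1 founders because Lemma \ref{fixb} applied to the midpoint locus $B^{*}$ (even granting its finiteness, which you cannot establish) only yields that $\rho_{\univ}(\zeta)$ has \emph{no} fixed point, which is perfectly consistent with $\rho_{\univ}(\zeta^{kd})$ having one; route 2 stalls because the divisibility exponent has no reason to grow.

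The paper sidesteps all of this by taking the base point on the interval $\{\nu_t\}$ limiting to $B$ rather than in $B$ itself. By Lemma \ref{invb}, for small $t$ one has $\nu_t\in C_{\delta}=C_{\zeta^{k}}$, and one can further arrange $\nu_t\notin C_{\zeta}$ (since $B\ne\zeta(B)$ and each point of $B$ is incomparable with its $\zeta$-image, the path from $\nu_t$ to $\zeta(\nu_t)$ genuinely branches for $\nu_t$ near $B$). Lemma \ref{return} then applies with the exponent $k$ itself and yields a leaf $\check{\nu}_m$ with $\zeta^{k}(\check{\nu}_m)=\check{\nu}_m$, i.e.\ $\delta$ fixes a leaf; the special sections at the two ideal fixed points of the resulting hyperbolic isometry are then fixed by $\rho_{\univ}(\delta)$. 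Now Lemma \ref{fixb}, applied to the \emph{original} finite locus $B$ and to $\delta\in\stab(B)$, forces $\delta$ to fix $B$ pointwise, contradicting the hypothesis that $\stab(B)=\langle\delta\rangle$ acts nontrivially. Nothing about $B^{*}$ beyond its existence is ever needed, and no descent is required. Your preliminary reductions (incomparability of $\lambda$ and $\delta(\lambda)$, the inclusion $C_{\zeta}\subset C_{\zeta^{k}}$, evenness of the path length) are correct and reappear in the paper; the one missing idea is to move the base point off $B$ onto its defining interval via Lemma \ref{invb}, so that Lemma \ref{return} returns a leaf fixed by $\delta$ rather than by $\delta^{d}$.
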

\begin{proof}
Let $B=\{\lambda _1,\cdots ,\lambda _n\}$.
By Theorem \ref{z},
$\stab(B)$ is generated by some single element $\alpha$.
We assume by contradiction that $\alpha$ is divisible.
Since $M$ is aspherical (as was noted in the proof of Lemma \ref{fixb}),
$\pi_1(M)$ has no infinitely divisible elements (see \cite[Theorem 4.1]{fr}).
Hence, there exists an indivisible element $\beta\in\pi_1(M)$
such that $\alpha=\beta^k$ for some $k>1$.
Note that 
since $\beta\notin\stab(B)$,
the points
$\lambda _i\in B$ and $\beta (\lambda _i)\in\beta(B)$ are distinct for any $i$.
Moreover, we see that they are incomparable for any $i$.
In fact,
if $\lambda _i$ and $\beta (\lambda _i)$ were comparable,
say $\lambda_i<\beta (\lambda_i)$,
then $\lambda _i <\beta^k (\lambda _i)=\alpha(\lambda_i)$,
contradicting the assumption that $\alpha\in\stab(B)$.
Let $\{\nu _t\}_{0<t<\epsilon}$ be an embedded interval such that
$B=\lim _{t\to 0}\nu_t$.
Since $B$ is $\alpha$-invariant,
it follows from Lemma \ref{invb} that
there is $\nu\in\{\nu _t\}_{0<t<\epsilon}$ such that
$\nu\in C_{\alpha}=C_{\beta^k}$.
We can (and do) take such $\nu$ so that $\nu$ also satisfies that
$\nu\notin C_{\beta}$.
Let $\coprod _{i=1}^l[\hat{\nu}_{i-1},\check{\nu}_{i}]$ $(l>1)$
be the path joining $\nu$ to $\beta (\nu)$.
By the choice of $\nu$ and by Lemma \ref{return},
we have $\beta ^k(\check{\nu}_{m})=\check{\nu}_{m}$
where $m=l/2$.
It follows that $\rho _{\univ}(\beta ^k)$ has a fixed point in $S^1_{\univ}$.
By Lemma \ref{fixb},
$\beta ^k=\alpha$ fixes all points in $B$,
which is a contradiction.
\end{proof}

\begin{rem}
The author does not know whether or not
there is a finite branch locus $B$ such that
$\stab(B)$ acts on $B$ trivially
and is generated by a divisible element.
\end{rem}

We will give an  example of a tautly foliated compact $3$-manifold
admitting a finite branch locus
whose stabilizer acts on the locus nontrivially.
We remark that
a recipe how to construct such a locus has already been 
provided in \cite[Example 3.7]{cd03},
and  our construction follows it.

\begin{ex}
Let  $P=D^2 -(E_1\cup E_2)$ be the unit disk in  $\mathbb{R}^2$
with two open disks removed,
where $E_1,E_2$ are disks centered $(-\frac12,0),(\frac12,0)$
with radius $\frac14$ respectively.
On $P$ we consider a standard singular foliation $\mathcal{G}$
(see Fig. \ref{fig})
satisfying the following properties:
(1) $\mathcal{G}$ has $(0,0)$ as its unique singular point, which is of saddle type,
(2) $\mathcal{G}$ is transverse to $\partial P$,
(3) all leaves of $\mathcal{G}$ (except the $4$ separatrices) are compact,
and
(4) $\mathcal{G}$ is symmetric with respect to both the $x$-axis and $y$-axis.

\begin{figure}[tbp]
\begin{center}
\includegraphics[keepaspectratio, scale=0.5]
{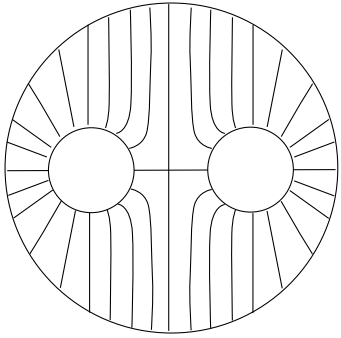}
\caption{A singular foliation $\mathcal{G}$ of $P$.}
\label{fig}
\end{center}
\end{figure}

Put  $S_0=\partial D^2$,
$S_1=\partial E_1$ and $S_2=\partial E_2$.
Let $(P', \mathcal{G}')$ be a copy of $(P,\mathcal{G})$,
and let $c:P'\to P$ be the identification.
We construct a double
$\Sigma=P\cup P'$ using diffeomorphisms
$g_i:S_i\to c(S_i)$  ($i=0,1,2$) to glue $S_i$ to $c(S_i)$,
where
$c^{-1}\circ g_i$ is a rotation by an irrational angle for $i=0$
and by angle $\pi$ for $i=1,2$.
Then $\mathcal{G}$ and $\mathcal{G}'$ 
induce a singular foliation $\mathcal{G}''$ of $\Sigma$
with two saddle singularities
and without any saddle connection.
By construction,
there exists a $\mathcal{G}''$-preserving homeomorphism $\rho$ of $\Sigma$
which fixes the singular points
and which is  the rotation by $\pi$ near the singular points.
Fix a hyperbolic structure on $\Sigma$.
Then each leaf of $\mathcal{G}''$
except the singular points and the separatrices
is isotopic to a unique embedded geodesic,
and the closure of the union of these geodesics constitutes
a geodesic lamination, 
say $\lambda$, on $\Sigma$.
Note that the two complementary regions $Q_1$ and $Q_2$ to $\lambda$
are ideal open squares.
There exists a $\lambda$-preserving 
homeomorphism $\psi$ of $\Sigma$ isotopic to $\rho$.
Let $M$ be the mapping torus of $\psi$, that is,
$M=\Sigma\times [0,1]/(s,1)\sim (\psi (s),0)$.
Then $\lambda$ induces a surface lamination $\Lambda$ of $M$
whose complementary regions $R_i$ are $Q_i$-bundles over $S^1$ for $i=1,2$.
Denote by $p_i:R_i\to S^1$ the bundle projection.
Now we extend $\Lambda$ to a foliation $\mathcal{F}$ of $M$
by filling $R_i$ ($i=1,2$)
with leaves diffeomorphic to $Q_i$ as follows.
Denote the boundary components of $R_i$
by $C_{i1}$ and $C_{i2}$, which are open cylinders.
Let $\gamma_i$ be an oriented loop in $R_i$
such that $p_i\vert \gamma_i$ is a diffeomorphism onto $S^1$.
Then the composition $\gamma_i ^2 =\gamma_i *\gamma_i$
is isotopic to a leaf loop $\gamma_{ij}$ of $C_{ij}$
which is a generator of $\pi _1(C_{ij})$.
We foliate $R_i$ as a product by leaves isotopic to the fibers $Q_i$
so that 
the holonomy along $\gamma_{i1}$ is contracting
and the holonomy along $\gamma_{i2}$ is expanding.
Then the resulting foliation $\mathcal{F}$ is taut
and has two-sided branching,
and each end of a lift of $\gamma_i$ to $\widetilde M$ gives a branch locus
consisting of two points.
Let $\alpha_i$ be an element in $\pi _1(M)$
whose conjugacy class corresponds with the free homotopy class of $\gamma_i$.
Then $\alpha_i$ belongs to the stabilizer of some branch locus
and acts on the locus nontrivially, as desired.
\end{ex}


\section{loops and actions}

Given a loop in a tautly foliated manifold $(M,\mathcal{F})$,
it is natural to ask whether
it is transversable, or tangentiable, to $\mathcal{F}$.
In this section,
we observe that these properties of loops
are expressed completely in the language of the natural action.
Furthermore, we consider relations
between such properties and the branching phenomenon of $\widetilde{\mathcal{F}}$.

We do not need to assume leafwise hyperbolicity in the first two propositions below.

\begin{prop}
\label{basic}
Let $\gamma$ be a loop in $M$,
and $\alpha$ an element in $\pi _1(M,p)$
whose conjugacy class corresponds with the free homotopy class of $\gamma$.
Then, $\gamma$ is tangentiable
if and only if
the action of $\alpha$ on $L$ has a fixed point.
Similarly, $\gamma$ is positively $($resp. negatively$)$ transversable
if and only if
there is a point $\lambda$ in $L$ such that
$\alpha(\lambda)>\lambda$ $($resp. $\alpha(\lambda)<\lambda)$.
\end{prop}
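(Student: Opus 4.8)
The plan is to prove the two equivalences separately, in each case passing to the universal cover and translating the geometric condition on $\gamma$ into a statement about the deck transformation $\alpha$ acting on $L$. Throughout I use the standard dictionary between free homotopy classes of loops in $M$ and conjugacy classes in $\pi_1(M)$, together with the fact (from Novikov's theorem, already invoked in \S5) that the inclusion of each leaf is $\pi_1$-injective, so that $\stab(\lambda)=\pi_1(\underline{\lambda})$ for every $\lambda\in L$.

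For tangentiability I would argue as follows. If $\gamma$ is freely homotopic to a leaf loop, then I may assume $\alpha$ is represented by a loop $c$ lying in a single leaf $\underline{\lambda}$; lifting $c$ to $\widetilde M$ starting at a point $\tilde p$ of the leaf $\lambda$ over $\underline{\lambda}$, the lift stays in $\lambda$ and ends at $\alpha(\tilde p)\in\lambda$, whence $\alpha(\lambda)=\lambda$, a fixed point. Conversely, if $\alpha(\lambda)=\lambda$ then $\alpha$ preserves the leaf $\lambda$; joining any $\tilde p\in\lambda$ to $\alpha(\tilde p)$ by a path inside the connected leaf $\lambda$ and projecting gives a loop contained in $\underline{\lambda}$ representing $\alpha$, so $\gamma$ is tangentiable. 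The trivial case $\alpha=1$ is immediate, a constant loop being a leaf loop.

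For transversability one direction is easy: if $\gamma$ is freely homotopic to a positively transverse loop $c$, I lift $c$ to a positively transverse path from $\tilde p\in\lambda$ to $\alpha(\tilde p)\in\alpha(\lambda)$, and by the definition of the order on $L$ this path witnesses $\lambda<\alpha(\lambda)$. The converse is where the real work lies. Assuming $\alpha(\lambda)>\lambda$, I want to produce a positively transverse path in $\widetilde M$ from $\tilde p$ to $\alpha(\tilde p)$, since its image in $M$ is then a positively transverse loop representing $\alpha$. A positively transverse arc does exist from $\tilde p$ to \emph{some} point of $\alpha(\lambda)$ (start with an arc witnessing $\lambda<\alpha(\lambda)$ and drag its foot along $\lambda$ to $\tilde p$ through finitely many product charts), but its endpoint is generally not $\alpha(\tilde p)$, and the two candidate endpoints lie on the \emph{same} leaf $\alpha(\lambda)$, so one cannot correct the endpoint by another transverse arc (such arcs strictly raise the leaf and cannot return to it).

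The main obstacle is exactly this closing-up step: realizing the prescribed element $\alpha$, rather than $\alpha$ composed with some element of $\pi_1(\underline{\lambda})$, by an honest positively transverse loop. To handle it I would invoke tautness to fix a nonsingular, positively oriented flow $\Phi$ transverse to $\mathcal F$, and exploit the strict inequality $\lambda<\alpha(\lambda)$ to maintain a supply of leaves immediately below $\alpha(\lambda)$: flow $\tilde p$ upward to a leaf $\lambda'$ with $\lambda<\lambda'<\alpha(\lambda)$ and $\lambda'$ close to $\alpha(\lambda)$, then reposition within a one-sided foliated neighborhood of $\alpha(\lambda)$, approached from below, along a leafwise path ending over $\alpha(\tilde p)$, using a height function that increases strictly monotonically from $\lambda'$ up to $\alpha(\lambda)$, so that the repositioning arc is positively transverse and terminates exactly at $\alpha(\tilde p)$. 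Concatenating and projecting yields the transverse loop. The delicate point I would treat carefully is the global monotonicity of this height function along the repositioning path: one must cover a leafwise path in $\alpha(\lambda)$ by finitely many flow boxes and check that the strict inequality prevents the lower side of $\alpha(\lambda)$ from being forced upward elsewhere, a phenomenon that could a priori occur near branch loci. The negatively transversable case is identical after reversing the transverse orientation.
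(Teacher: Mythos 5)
Your proof is correct and follows the paper's route: the tangentiability equivalence is argued exactly as in the paper (lift a leaf loop to see it preserves a leaf, and conversely join $\tilde p$ to $\alpha(\tilde p)$ inside an invariant leaf and project), and for transversability the paper merely says the claim "is also shown easily," with your corner-rounding of the transverse-arc-plus-leafwise-arc concatenation being the standard way to supply the omitted details. The one worry you raise about branch loci is not a real obstruction, since the tilting takes place inside a one-sided product neighborhood of a \emph{compact} leafwise arc in $\alpha(\lambda)$, which always exists.
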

\begin{proof}
Let $\lambda$ be a leaf of $\widetilde{\mathcal{F}}$
and suppose that the deck transformation $\alpha$ leaves $\lambda$ invariant.
Take any point $x$ in $\lambda$
and join $x$ to $\alpha(x)$ by a path in $\lambda$.
Then it projects down to a leaf loop in $M$ freely homotopic to $\alpha$.
Conversely, suppose $\gamma$ is a leaf loop in $M$.
Join the base point $p$ to a point of $\gamma$ by a path $c$.
Then, the loop
$c*\gamma *c^{-1}$
represents an element of $\pi_1(M,p)$ conjugate to $\alpha$.
Obviously it has a fixed point, hence so does $\alpha$.
The claim on transversability is also shown easily.
\end{proof}

According to the proposition,
we can define the tangentiability and transversability
for elements in $\pi_1(M)$ in terms of the natural action:
An element $\alpha$ in $\pi _1(M)$ is {\it tangentiable}
if there is a leaf $\lambda$ in $L$ such that $\alpha(\lambda)=\lambda$.
It is positively (resp. negatively) {\it transversable}
if there is a leaf $\lambda$ in $L$ such that $\alpha(\lambda)>\lambda$
(resp. $\alpha(\lambda)<\lambda$).

We remark here that
$\pi _1(M)$ can have an element
which is neither tangentiable nor transversable.
Such an element exists if and only if
$\mathcal{F}$ has two-sided branching
(Calegari \cite[Lemma 3.2.2]{c00}).

\begin{prop}
\label{pnt}
Let $\alpha\in\pi_1(M)$.
Suppose there are points $\lambda ,\mu\in L$ such that
$\alpha (\lambda)>\lambda$ and $\alpha(\mu) <\mu$.
Then there exists a point $\nu\in L$ such that $\alpha(\nu)=\nu$.
Moreover, if $\lambda$ and $\mu$ are incomparable,
then such $\nu$ can be found in some branch locus.
\end{prop}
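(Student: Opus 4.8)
The plan is to regard $\alpha$ acting on the connected set $C_\alpha$ as an analogue of an orientation-preserving self-map of an interval, run an intermediate-value argument there to get the fixed point, and then extract the branch-locus refinement directly from Lemma~\ref{c2}.

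First I would observe that both hypotheses force the given points into the comparable set: $\alpha(\lambda)>\lambda$ means $\lambda<\alpha(\lambda)$, so $\lambda\in C_\alpha$, and likewise $\alpha(\mu)<\mu$ gives $\mu\in C_\alpha$. On all of $L$ the sign of the displacement controls membership in $C_\alpha$, so I set $P=\{\nu\in L\mid\alpha(\nu)>\nu\}$ and $N=\{\nu\in L\mid\alpha(\nu)<\nu\}$; each of these is automatically contained in $C_\alpha$, and together with the fixed-point set $\mathrm{Fix}=\{\nu\in C_\alpha\mid\alpha(\nu)=\nu\}$ they partition $C_\alpha$. By hypothesis $\lambda\in P$ and $\mu\in N$, so $P$ and $N$ are nonempty.

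The core of the existence statement is that $P$ and $N$ are open; granting this, Proposition~\ref{co} (connectedness of $C_\alpha$) forbids the disconnection $C_\alpha=P\sqcup N$, so $\mathrm{Fix}\neq\emptyset$ and a fixed point $\nu$ exists. Openness reduces to the claim that the strict order is an open relation on $L\times L$: if $\{(x,y)\mid x<y\}$ is open, then $P=(\mathrm{id},\alpha)^{-1}(\{x<y\})$ and $N=(\mathrm{id},\alpha)^{-1}(\{x>y\})$ are preimages of open sets under the continuous map $\nu\mapsto(\nu,\alpha(\nu))$, hence open. I expect this openness to be the only genuine obstacle, since it is not formal for the non-Hausdorff leaf space $L$ and must be argued from the local product structure of $\mathcal{F}$. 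Concretely, if $x<y$ is witnessed by a positively transverse path from $x$ to $y$, pushing the two endpoints into nearby leaves through foliation product charts still yields a positively transverse connecting path, so $x'<y'$ persists for $(x',y')$ in a neighborhood of $(x,y)$; the symmetric statement handles $\{x>y\}$.

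For the refinement, suppose $\lambda$ and $\mu$ are incomparable. Then the path $\gamma=\coprod_{i=1}^n[\hat{\nu}_{i-1},\check{\nu}_i]$ from $\lambda$ to $\mu$ has length $n>1$. Since $\lambda,\mu\in C_\alpha$, Lemma~\ref{c2} gives that every breakpoint $\check{\nu}_i,\hat{\nu}_i$ with $1\le i\le n-1$ is fixed by $\alpha$, and by the definition of a path these breakpoints lie in branch loci. As $n-1\ge 1$, at least one such breakpoint exists, so $\nu=\check{\nu}_1$ is a fixed point of $\alpha$ contained in a branch locus, which is precisely the asserted conclusion. Note that this second part does not reinvoke the intermediate-value argument: it is immediate once membership $\lambda,\mu\in C_\alpha$ and incomparability are in hand.
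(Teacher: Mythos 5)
Your proof is correct and follows essentially the same route as the paper: an intermediate-value/connectedness argument for the existence of a fixed point, and Lemma~\ref{c2} applied to the path from $\lambda$ to $\mu$ (both of which lie in $C_\alpha$) to place the fixed point in a branch locus when $\lambda$ and $\mu$ are incomparable. The only difference is cosmetic --- the paper runs the intermediate value theorem on the embedded interval $[\lambda,\mu]$ in the comparable case, while you run the equivalent connectedness argument uniformly on $C_\alpha$ via Proposition~\ref{co}, taking the extra (welcome) care to justify openness of the sets $P$ and $N$.
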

\begin{proof}
If $\lambda$ and $\mu$ are comparable,
then the conclusion follows immediately from the intermediate value theorem.
If $\lambda$ and $\mu$ are incomparable,
then the conclusion follows from Lemma \ref{c2}.
\end{proof}

This proposition means that
if a loop in $M$ is both positively and negatively transversable to $\mathcal{F}$,
then it is tangentiable to $\mathcal F$.
Such a statement might also follow from the Poincar{\'e}-Bendixson theorem.
But, here we gave a proof purely in terms of
the natural action of $\pi _1(M)$ on $L$.

In the following we assume leafwise hyperbolicity
and observe that tangentiability and/or transversability
of loops in $M$ and the infiniteness of branch loci
are closely related.

\begin{thm}
\label{infb1}
Let $M$ be a closed oriented $3$-manifold,
and $\mathcal{F}$ a transversely oriented 
leafwise hyperbolic taut foliation of $M$
with one-sided branching.
Suppose that there is non-contractible leaf loop $\gamma$ in $M$
which is not transversable.
Then every branch locus of $L$ is an infinite set.
\end{thm}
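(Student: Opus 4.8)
The plan is to reduce everything to a single group element and the geometry of its fixed set. Reversing the transverse orientation if needed, assume $\mathcal{F}$ has one-sided branching in the positive direction. Let $\alpha\in\pi_1(M)$ be an element whose conjugacy class is the free homotopy class of $\gamma$. By Proposition \ref{basic}, since $\gamma$ is a leaf loop the element $\alpha$ is tangentiable (it fixes some $\lambda_0\in L$), and since $\gamma$ is not transversable there is no $\mu$ with $\alpha(\mu)>\mu$ or $\alpha(\mu)<\mu$. Moreover $\alpha$ is nontrivial because $\gamma$ is non-contractible.

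Next I would analyze the comparable set $C_\alpha$. Non-transversability means that for every $\mu\in C_\alpha$ the comparable pair $\mu,\alpha(\mu)$ must in fact be equal, so $C_\alpha=\mathrm{Fix}(\alpha)$. I would then show this set is \emph{down-closed}: if $\alpha(x)=x$ and $y<x$, then $\alpha(y)<x$ as well, so $y$ and $\alpha(y)$ are two points below $x$ and hence comparable (there is no negative branching), whence $\alpha(y)=y$ by non-transversability. By Lemma \ref{c1} and the remark following it, $\mathrm{Fix}(\alpha)$ contains a whole downward ray $\{\mu<\lambda\}$; by Proposition \ref{co} it is open and connected; and by the faithfulness Theorem \ref{main} it is a \emph{proper} subset of $L$. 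Consequently its complement is up-closed, closed and nonempty, and a minimal point $w$ of the complement, together with its images $\alpha^k(w)$, forms a branch locus $B^*$ sitting exactly at the top of $\mathrm{Fix}(\alpha)$, on which $\alpha$ acts as a free, infinite-order permutation. This already exhibits one infinite branch locus.

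The core finiteness argument then runs as follows, for a branch locus $B=\lim_{t\to0}\nu_t$ \emph{whose trunk $\{\nu_t\}$ is fixed by $\alpha$}. Fixing the trunk forces $\alpha(B)=\lim\alpha(\nu_t)=B$, so $\alpha$ permutes $B$. If $B$ were finite, then $\alpha^{N}$ with $N=(\# B)!$ would fix all (at least two) points of $B$ while still fixing the trunk pointwise; Lemma \ref{key} would then make $\alpha^{N}$ trivial, contradicting the torsion-freeness of $\pi_1(M)$ (noted in the proof of Lemma \ref{fixb}) together with $\alpha\neq 1$. Hence every such $B$ is infinite; $B^*$ is an instance, since its trunk lies in the down-closed set $\mathrm{Fix}(\alpha)$.

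The hard part is upgrading this from ``some'' to ``every'' branch locus: an arbitrary branch locus may sit \emph{above} $\mathrm{Fix}(\alpha)$, where $\alpha$ moves its trunk incomparably and the hypotheses of Lemma \ref{key} fail. Because $\mathrm{Fix}(\alpha)$ is down-closed, one only controls the bottom of each trunk, not the part near $B$. To close this gap I would exploit one-sidedness: with no negative branching the down-sets in $L$ are coherently nested, so every trunk descends into the fixed region, and I would try either to transport $B$ into $\mathrm{Fix}(\alpha)$ by a suitable element of $\pi_1(M)$ (infiniteness being orbit-invariant) or to replace $\alpha$ by a conjugate adapted to $B$. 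Failing a uniform such reduction, the fallback is an argument by contradiction using the classification of \S5: a finite branch locus above the fixed region would have cylindrical leaves and stabilizer $\cong\mathbb{Z}$ (Theorems \ref{pc}, \ref{z}, \ref{div}), from which I would extract a transversable direction incompatible with the non-transversability of $\gamma$. Making this final incompatibility precise is, I expect, the main obstacle.
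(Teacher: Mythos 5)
Your overall mechanism is the same as the paper's --- reduce to a branch locus that $\alpha$ fixes pointwise together with its trunk, and then invoke Lemma \ref{key} --- and your preliminary analysis ($C_\alpha=\mathrm{Fix}(\alpha)$ is open, connected, down-closed, and proper by Theorem \ref{main}) is correct. But the step you yourself flag as ``the main obstacle'' is a genuine gap: you never rule out a finite branch locus sitting above $\mathrm{Fix}(\alpha)$, and none of your three proposed escapes is carried out. In particular the fallback via Theorems \ref{pc}, \ref{z} and \ref{div} does not obviously produce the ``transversable direction'' you hope for: a finite locus with cylindrical leaves and $\stab(B)\cong\mathbb{Z}$ is perfectly compatible with the non-transversability of $\gamma$, since the generator of $\stab(B)$ need have nothing to do with $\alpha$. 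Your explicit construction of one infinite locus $B^{*}$ is fine but proves less than the theorem asserts.

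The paper closes the gap by exactly your option (a), and the missing justification is tautness. Let $B=\{\lambda_1,\dots,\lambda_n\}$ be a finite (say positive) branch locus and $\nu$ an $\alpha$-fixed leaf. Since $\mathcal{F}$ is taut and $M$ is closed, any two points of $M$ are joined by a positively transverse arc; lifting such an arc from $\underline{\lambda_1}$ to $\underline{\nu}$ produces $\beta\in\pi_1(M)$ with $\beta(\lambda_1)\le\nu$. As infiniteness of a branch locus is orbit-invariant, one may replace $B$ by $\beta(B)$ and assume $\lambda_1\le\nu$. Down-closedness of $\mathrm{Fix}(\alpha)$ (your Lemma \ref{c1} argument, which is where one-sidedness enters) gives $\alpha(\lambda_1)=\lambda_1$, so $B$ is $\alpha$-invariant; Theorem \ref{fix} upgrades this to $\alpha$ fixing $B$ pointwise (the paper uses this in place of your $\alpha^{N}$-plus-torsion-freeness variant, though either works); and since the trunk of the positive locus $B$ lies below $\lambda_1\le\nu$, it too is fixed pointwise, so Lemma \ref{key} forces $\alpha=1$, contradicting the non-contractibility of $\gamma$. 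With this one insertion your argument becomes the paper's proof, and the separate construction of $B^{*}$ becomes superfluous.
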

\begin{proof}
Suppose that there exists a finite, say positive, branch locus
$B=\{\lambda _1,\cdots ,\lambda _n\}$.
Let $\alpha$ be an element in $\pi _1(M)$
whose conjugacy class corresponds with
the free homotopy class of $\gamma$.
By Proposition \ref{basic},
$\alpha$ has a fixed leaf in $L$,
and for each $\mu\in L$
if $\mu$ is not fixed by $\alpha$ then $\mu\notin C_\alpha$.
Let $\nu$ be a fixed point of $\alpha$.
By Lemma \ref{c1},
for every $\eta$ with $\eta\le\nu$,
we have $\eta\in C_\alpha$,
and therefore $\alpha (\eta)=\eta$.
By replacing $B$ with $\beta(B)$
for some $\beta\in\pi _1(M)$ if necessary,
we can assume that $\lambda _1\le \nu$
and therefore $\alpha(\lambda_1)=\lambda_1$.
This implies in particular that $B$ is $\alpha$-invariant.
Since $B$ is finite,
by Theorem \ref{fix},
we have $\alpha(\lambda _i)=\lambda_i$ for any $1\le i\le n$.
By Lemma \ref{key}, $\alpha$ must be trivial,
which is a contradiction.
\end{proof}

\begin{cor}
Let $M$ be a closed oriented $3$-manifold,
and $\mathcal{F}$ a transversely oriented 
leafwise hyperbolic taut foliation of $M$
with every leaf dense.
Suppose that there is non-contractible leaf loop $\gamma$ in $M$
which is not transversable.
Then every branch locus of $L$ is an infinite set.
\end{cor}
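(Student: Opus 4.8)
The plan is to argue by contradiction, in parallel with the proof of Theorem \ref{infb1}, but using the density of leaves in place of the one-sided branching assumption. Let $\alpha\in\pi_1(M)$ correspond to the free homotopy class of $\gamma$; since $\gamma$ is non-contractible, $\alpha\ne 1$, and since $\gamma$ is a leaf loop it is tangentiable, so by Proposition \ref{basic} the element $\alpha$ fixes some leaf $\nu\in L$. The first step is to observe that non-transversability forces $C_\alpha$ to coincide with the fixed-point set of $\alpha$: if $\lambda\in C_\alpha$ then $\lambda$ and $\alpha(\lambda)$ are comparable, while non-transversability (again via Proposition \ref{basic}) excludes both $\alpha(\lambda)>\lambda$ and $\alpha(\lambda)<\lambda$, leaving $\alpha(\lambda)=\lambda$. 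In particular, by Proposition \ref{co}, the fixed-point set $C_\alpha=\mathrm{Fix}(\alpha)$ is a nonempty open subset of $L$.

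Next, suppose for contradiction that $\mathcal{F}$ has a finite branch locus $B=\{\lambda_1,\dots,\lambda_n\}$ (so $n\ge 2$ by the definition of a branch locus), and write $B=\lim_{t\to 0}\nu_t$ for an embedded interval $\{\nu_t\}_{0<t<\epsilon}$. Because every leaf of $\mathcal{F}$ is dense, the $\pi_1(M)$-orbit of $\lambda_1$ is dense in $L$, so it meets the open set $C_\alpha$; thus there is $\beta\in\pi_1(M)$ with $\beta(\lambda_1)\in C_\alpha$, i.e.\ $\alpha$ fixes $\beta(\lambda_1)$. Replacing $B$ by $\beta(B)$ (still a finite branch locus, with approaching interval $\{\beta(\nu_t)\}$), we may assume $\alpha(\lambda_1)=\lambda_1$ with $\lambda_1\in C_\alpha$.

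The key step is to upgrade the statement that $\alpha$ fixes one point of $B$ to the statement that $\alpha$ stabilizes $B$. Since $C_\alpha$ is open and contains $\lambda_1$, and since the approaching interval $\{\nu_t\}$ converges to $\lambda_1\in B$ as $t\to 0$ in the (non-Hausdorff) topology of $L$, the leaves $\nu_t$ lie in $C_\alpha=\mathrm{Fix}(\alpha)$ for all sufficiently small $t$; hence $\alpha(\nu_t)=\nu_t$ for such $t$, and therefore $\alpha(B)=\alpha(\lim_{t\to 0}\nu_t)=\lim_{t\to 0}\alpha(\nu_t)=B$. Thus $\alpha\in\stab(B)$ and $\alpha$ fixes the point $\lambda_1\in B$, so Theorem \ref{fix} yields that $\alpha$ fixes every point of $B$. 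As $\alpha$ also fixes the interval $\{\nu_t\}$ pointwise for small $t$ and $B$ contains at least two distinct fixed points, Lemma \ref{key} forces $\alpha=1$, contradicting $\alpha\ne 1$. This contradiction shows that no finite branch locus exists, so every branch locus is infinite.

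The main obstacle is precisely this key step: passing from a single fixed point of $\alpha$ in the branch locus to the $\alpha$-invariance of the entire locus. This is where the density hypothesis does its work, by letting one slide the finite branch locus (via $\beta$) into the open fixed-point region $C_\alpha$, after which the convergence of the approaching interval to a fixed point and the openness of $C_\alpha$ deliver invariance automatically. One should take care that the identity $C_\alpha=\mathrm{Fix}(\alpha)$ genuinely uses non-transversability in \emph{both} directions, and that the assertion $\nu_t\to\lambda_1$ is read in the non-Hausdorff topology of $L$, so that eventual membership of $\nu_t$ in the open neighborhood $C_\alpha$ of $\lambda_1$ is justified.
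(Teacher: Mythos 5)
Your proposal is correct and follows essentially the same route as the paper: identify $C_\alpha$ with the fixed-point set of $\alpha$ using non-transversability, use openness of $C_\alpha$ (Proposition \ref{co}) together with density of leaves to translate the finite branch locus into $C_\alpha$, and then conclude via Theorem \ref{fix} and Lemma \ref{key}. Your explicit justification that $\alpha$ stabilizes $B$ (by noting that the approaching interval $\{\nu_t\}$ eventually enters the open set $C_\alpha=\mathrm{Fix}(\alpha)$) is a welcome expansion of a step the paper leaves implicit when it defers to the argument of Theorem \ref{infb1}.
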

\begin{proof}
Suppose there is a finite branch locus $B$.
Let $\alpha\in\pi _1(M)$ be as in the proof of the preceding theorem.
By Proposition \ref{co}, there is an embedded open interval $I\subset L$
such that $I$ is contained in $C_\alpha$.
Since every leaf of $\mathcal{F}$ is dense,
there is $\beta\in\pi _1(M)$ such that
$\beta(B)\cap I\ne\emptyset$.
Then the same argument as in Theorem \ref{infb1} shows the conclusion.
\end{proof}

\begin{thm}
\label{infb2}
Let $M$ be a closed oriented $3$-manifold,
and $\mathcal{F}$ a transversely oriented
leafwise hyperbolic taut foliation of $M$ with branching.
Suppose that there is a non-contractible leaf loop $\gamma$ in $M$
which is not transversable.
Then $L$ has an infinite branch locus.
\end{thm}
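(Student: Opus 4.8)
The plan is to argue by contradiction, producing a nontrivial element of $\pi_1(M)$ that fixes both a whole neighborhood of a branch locus and two of its points, which Lemma \ref{key} forbids. Unlike Theorem \ref{infb1}, where one-sided branching let me push $B$ below a fixed leaf via Lemma \ref{c1}, here I must locate the branch locus intrinsically, from the topology of a comparable set. Let $\alpha\in\pi_1(M)$ correspond to the free homotopy class of $\gamma$; since $\gamma$ is non-contractible, $\alpha\ne 1$. Because $\gamma$ is tangentiable, Proposition \ref{basic} gives a leaf fixed by $\alpha$, so $C_\alpha\ne\emptyset$. Because $\gamma$ is not transversable, there is no $\lambda$ with $\alpha(\lambda)>\lambda$ and none with $\alpha(\lambda)<\lambda$; hence whenever $\lambda$ and $\alpha(\lambda)$ are comparable they coincide, i.e.\ $C_\alpha=\mathrm{Fix}(\alpha)$. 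By Proposition \ref{co}, $C_\alpha$ is open and connected, and by Theorem \ref{main} (this is where the branching hypothesis enters) the action is faithful, so $C_\alpha=\mathrm{Fix}(\alpha)\ne L$. Thus $C_\alpha$ is a nonempty proper open connected subset of $L$, and in particular has nonempty frontier.

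The heart of the argument is to read a branch locus off a frontier point of $C_\alpha$. Choose $\mu\in\overline{C_\alpha}\setminus C_\alpha$ and a sequence $\mu_n\to\mu$ with $\mu_n\in C_\alpha=\mathrm{Fix}(\alpha)$, taken along an embedded interval $\{\nu_t\}_{0<t<\epsilon}$. Since $\alpha(\mu_n)=\mu_n$ and the action is continuous, $\mu_n=\alpha(\mu_n)\to\alpha(\mu)$ as well. As $\mu\notin\mathrm{Fix}(\alpha)$ we have $\alpha(\mu)\ne\mu$, so the single sequence $\{\mu_n\}$ converges to the two distinct points $\mu$ and $\alpha(\mu)$; that is, $\mu$ and $\alpha(\mu)$ are non-separated and lie in a common branch locus $B=\lim_{t\to 0}\nu_t$. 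Moreover $\alpha$ fixes every $\nu_t$, so $\alpha(B)=\lim_{t\to 0}\alpha(\nu_t)=\lim_{t\to 0}\nu_t=B$, i.e.\ $B$ is $\alpha$-invariant, and $\alpha$ moves $\mu\in B$, so $\alpha\in\stab(B)$ acts nontrivially on $B$.

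It remains to reach the contradiction. By the standing assumption $B$ is finite, so some power $\alpha^k$ with $k\ge 1$ fixes a point of $B$, whence by Theorem \ref{fix} it fixes every point of $B$; in particular $\alpha^k$ fixes two distinct points of $B$. On the other hand, since $B$ is $\alpha$-invariant, Lemma \ref{invb} gives $\epsilon'$ with $\nu_t\in C_\alpha=\mathrm{Fix}(\alpha)$ for $0<t<\epsilon'$, so $\alpha$, and hence $\alpha^k$, fixes the interval $\{\nu_t\mid 0<t<\epsilon'\}$ pointwise. Lemma \ref{key} then forces $\alpha^k$ to be trivial. But $\pi_1(M)$ is torsion-free ($M$ being aspherical, as noted in the proof of Lemma \ref{fixb}) and $\alpha\ne 1$, so $\alpha^k\ne 1$, a contradiction; hence $L$ carries an infinite branch locus. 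The step demanding the most care is the frontier analysis of the second paragraph: the entire mechanism rests on the non-Hausdorffness of $L$, which converts the failure of $C_\alpha$ to be closed into a genuine $\alpha$-invariant branch locus on which $\alpha$ acts without fixed points, and I would verify carefully that $\mu$ and $\alpha(\mu)$ land in the \emph{same} branch locus and that it is exactly the one approached by the fixed interval $\{\nu_t\}$.
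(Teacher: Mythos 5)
Your proof is correct and follows essentially the same route as the paper's: identify $C_\alpha=\mathrm{Fix}(\alpha)$ via Propositions \ref{basic} and \ref{co}, locate an $\alpha$-invariant branch locus at the edge of the fixed set, and rule out its finiteness via Lemma \ref{fixb}/Theorem \ref{fix} together with Lemma \ref{key}. The only cosmetic differences are that you obtain $\mathrm{Fix}(\alpha)\ne L$ by citing Theorem \ref{main} rather than the paper's argument inside $L'=\{\mu\mid\mu>\nu\}$, and you locate the branch locus at a frontier point of the open connected set $C_\alpha$ (where the order-convexity coming from Lemma \ref{c2} guarantees the approaching interval lies in $\mathrm{Fix}(\alpha)$) instead of at the first non-fixed point of an interval $[\nu,\nu']$.
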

\begin{proof}
Let $\alpha$ be as in Theorem \ref{infb1}.
Then $\alpha$ has a
fixed point $\nu\in L$,
and for each $\mu\in L$
if $\mu$ is not fixed by $\alpha$ then $\mu\notin C_\alpha$.
Without loss of generality, we assume that
$\mathcal{F}$ has a positive branch locus.
We claim that there exist some $\nu '>\nu$ such that
$\nu '$ and $\alpha (\nu ')$ are incomparable.
Put $L'=\{\mu\mid\mu >\nu\}$.
Then we can observe that $L'$ is a submanifold of $L$
with one-sided branching in the positive direction
and contains at least one branch locus.
If $\alpha$ fixes all leaves in $L'$,
then by applying Lemma \ref{key} to a branch locus in $L'$
we obtain that $\alpha$ is trivial in $\pi _1(M)$,
which contradicts the hypothesis that
$\alpha$ is represented by a non-contractible loop.
Therefore, there exists some $\nu '\in L$
which is not fixed by $\alpha$.
Since such $\nu '$ does not belong to $C_\alpha$,
the claim is shown.
Since $\nu <\nu '$ and $\alpha (\nu)=\nu$,
it follows that
$\nu$ is a common lower bound for $\nu '$ and $\alpha (\nu ')$.
Thus,
the fact that $\nu '$ and $\alpha (\nu ')$ are incomparable implies that
there is a unique $\lambda\in (\nu ,\nu' ]$ such that
$\mu\in [\nu ,\nu ']$ is fixed by $\alpha$
if and only if $\mu\in [\nu ,\lambda)$.
Evidently, $\lambda$ belongs to some $\alpha$-invariant branch locus,
say $B$.
Also note that $\rho _{\univ}(\alpha)$ has a fixed point
because $\alpha$ fixes a point in $L$.
We now show $B$ is infinite.
Suppose not.
Then, by Lemma \ref{fixb},
all leaves in $B$ are $\alpha$-fixed,
and, by Lemma \ref{key},
$\alpha$ must be trivial,
which is a contradiction.
\end{proof}

\end{document}